\newtheorem{theorem}{Theorem}[section] 
\newtheorem{lemma}[theorem]{Lemma}     
\newtheorem{proposition}[theorem]{Proposition}
\newcommand     {\GL}[1][\Z]    {\sym{GL}(2,{#1})}
\newcommand     {\C}            {{\mathbb C}}
\newcommand     {\F}            {{\mathbb F}}
\newcommand     {\Q}            {{\mathbb Q}}
\newcommand     {\Sone}         {{\mathbb S}^1}
\newcommand     {\SL}[1][\Z]    {\sym{SL}(2,{#1})}
\newcommand     {\Sp}[1][\Z]    {\sym{Sp}(2,{#1})}
\newcommand     {\Z}            {{\mathbb Z}}
\newcommand     {\N}            {\ensuremath{\mathbb{N}}}
\newcommand     {\qf}[3]        {[#1,#2,#3]}
\newcommand     {\smat}[1]      {\left(\begin{smallmatrix}#1\end{smallmatrix}\right)}
\newcommand     {\tr}           {\sym{tr}}
\newcommand     {\sym}[1]       {\operatorname{#1}}
\newcommand     {\PH}           {{\mathbb H}}
\newcommand     {\disc}         {\textrm{disc}}
\newcommand     {\kro}[2]       {{\left(\frac{#1}{#2}\right)}}
\newcommand     {\isdiv}        {\ensuremath{\mathop{\mid}}}
\newcommand     {\slashdiv}     {\ensuremath{\mathop{/}}}
\renewcommand     {\det}          {\ensuremath{\mathrm{det}}}
\newcommand     {\rsym}         {\ensuremath{\mathrm{sym}}}
\newcommand     {\set}[1]       {{\def\st{\;:\;}\left\{#1\right\}}}
\newcommand     {\Mat}[1]       {\ensuremath{\mathrm{M}_{#1}}}
\title[Computation of Siegel modular forms]{%
  Explicit computations of Siegel modular forms of degree two.
}
\author[Raum, Ryan, Skoruppa, Tornar\'ia]{%
  Martin Raum, Nathan C. Ryan, Nils-Peter Skoruppa, and Gonzalo Tornar\'ia
  }
\date{%
  \today
}
\begin{document}

\maketitle

\begin{abstract}
Unlike classical modular forms, there is currently no general way
to implement the computation of Siegel modular forms of arbitrary
weight, level and character, even in degree two.  There is however, a
way to do it in a unified way.  After providing a survey of known
computations we describe the implementation of a class modeling
Siegel modular forms of degree two in Sage.  In particular, we
describe algorithms to compute a variety of rings of Siegel
modular forms, many of which are implemented in our class.
A wide variety of Siegel modular forms (e.g., both vector- and
scalar-valued) can be modeled via this class and we unify these via a
construct we call a formal Siegel modular form.  We define this notion
and discuss it in detail.

\end{abstract}

\section{Introduction}

Computing modular forms is of broad interest.  For example, the
explicit computation of elliptic modular forms has led to a number of
conjectures that in many cases have become theorems.  Similar, but
neither as comprehensive nor as systematic, computations have been
carried out with Siegel modular forms as well.  A consideration
of Siegel modular forms and their Fourier expansions aiming at an
implementation should suit two purposes.  Firstly, one has to obtain
the generators of a space or ring of modular forms as easily and
quickly as possible.  Secondly, one wants to construct more
complicated modular forms and recognize them as a linear or algebraic
combination of the generators.

Computing elliptic modular forms of level 1 is rather straightforward.
It is well-known that the ring of such forms is a polynomial ring
generated by the Eisenstein series $E_4$ and $E_6$.  Alternatively,
the weight $k$ space is spanned by $E_k$ and all the products $E_{i}
E_{k-i}$ for $i$ running from $4$ to $k-2$ as can be proved by means
of methods found in \cite{kohnenzagierperiods}.  Computations in level
bigger than one are typically done via the theory of modular
symbols.  For instance, this is how both MAGMA \cite{MAGMA} and Sage
\cite{Sage} compute spaces of elliptic modular forms.  

An elliptic modular form can be thought of as a Siegel modular form of
degree 1.  The theory of the computation of Siegel modular forms of
even degree 2 is not as general as that for degree 1.   There is
currently no theory of modular symbols.  What there is, though, is an
explicit description of a number of particular rings of Siegel modular
forms.  More generally, by a theorem of Igusa, we know that every ring
associated to any finite index subgroup of the full Siegel modular
group is generated by polynomials in theta constants
(cf. \cite{Igusa4,Igusa5}).  This theorem has been exploited by
Igusa \cite{Igusa2,Igusa3} to explicitly list the
5 generators of the ring of Siegel modular forms of degree 2.  A more
recent application of this theorem in a number of papers
\cite{Ibukiyama,Ibukiyama3,Ibukiyama2} by Ibukiyama and his collaborators have
characterized rings of Siegel modular forms for levels up to 4 as
polynomial rings in explicitly described generators.  These generators
are typically described in terms of theta constants.

Another method to compute spaces of Siegel modular forms (and thereby,
possibly rings of modular forms) is Poor and Yuen's restriction
technique (see for example, \cite{PoorYuen}).  In their papers on this
topic they are typically trying to compute forms in level 1 and degree
larger than 2, but in principle their technique should work in degree
2.  The potential obstacle when computing for level bigger than one is that there are
several cusps and one needs to know the expansion at each of them.
The method proceeds roughly as follows:  they define a linear map from
spaces of degree $n$ Siegel modular forms down to degree 1.  They
search for linear relations among the Fourier coefficients of degree 1
forms and pull them back to linear relations among the Fourier
coefficients of degree $n$ forms.  This allows them to compute the
dimensions of particular spaces of degree $n$ Siegel modular forms and
to write down bases for those spaces.  To determine these bases, they
compute theta series, Ikeda lifts, Eisenstein series, and theta
constants (see Section~\ref{sec:algorithms}).

The above two families of results are both for scalar-valued Siegel
modular forms.  In some cases (e.g., \cite{Satoh}), one can explicitly
compute spaces of vector-valued Siegel modular forms.  An example of
such a space is given in terms of the 4 Igusa generators of even
weight and an explicitly described differential operator (see
Section~\ref{ssec:satoh-bracket}).  We limit our attention to spaces of
vector-valued Siegel modular forms because the ring of such forms is
not finitely generated.

In addition to providing a useful survey of what is known about the
computation of degree 2 Siegel modular forms (e.g., what rings can be
computed, what vector spaces, etc.), we describe an implementation
\cite{smf-package} of the types of Siegel modular forms often used to
describe generators.  We provide algorithms that are used in our
implementation in Sage \cite{Sage} and that are improvements
(sometimes minor) over the naive algorithms that follow immediately
from theorems in the literature. Another feature of our implementation
is that it allows for algebraic combinations of vector- and scalar-valued Siegel modular forms.  In fact, our class was designed in such
a general way that one could use the same basic class to implement a
wide variety of automorphic forms.

Often one is interested in computing Hecke eigenvalues associated to
modular forms as opposed to computing Fourier coefficients.  In our
work we are mainly interested in computing Fourier coefficients but
now mention recent work that allows for the computation of Hecke
eigenvalues.  It is possible to compute Hecke eigenvalues by a purely
cohomological approach.  Until now this has been particularly useful
in the case of vector-valued Siegel modular forms
\cite{BergstroemFaberVanderGeer1,BergstroemFaberVanderGeer2}.  The
trace of the Hecke action on the full cohomology can be determined by
counting curves modulo $p$.  These calculations are demanding in terms of
theory and raw computational power.

The paper is organized as follows.  In Section~\ref{sec:siegel} we
give the definition of Siegel modular forms and provide the definition
of formal Siegel modular forms: this is what our Sage package actually
implements.  In Section~\ref{sec:algorithms}, we do two things when
appropriate.  First, for a wide
variety of types of Siegel modular forms of degree 2, we give a survey
of known theorems related to their computation.  Second, we give a
description of the algorithm we use to compute Siegel modular forms of
that type and any theorems related to the algorithm.  Then, in
Section~\ref{sec:rings} we list the spaces and rings that are
described in the literature and that we have computed.  The final two
sections deal with the implementation, its details and the choices we
have made.  We also provide links to data for modular forms in those
rings for which we have computed data.  In
Section~\ref{sec:implementation} we describe in broad strokes the
class we designed, paying particular attention to how we multiply
Siegel modular forms of different types and to how we model the
precision of a given form.  In Section~\ref{sec:generalizations} we
briefly point out how other spaces of automorphic forms can be modeled
via a class similar to the one we describe.


\section{Siegel Modular Forms}\label{sec:siegel}

In this section, we define Siegel modular forms of degree 2.   If one were to implement this definition directly, one would have a rather
inefficient class as it would be essentially a dense power series ring
in three variables. 
What we end up implementing in Sage is a
\emph{formal} Siegel modular, a notion we define in this section.
Also in this section we show that formal Siegel modular forms do model
the Fourier expansions of Siegel modular forms rather well.  A formal
Siegel modular form is in some sense more general than a Siegel
modular form: the set of Fourier expansions of Siegel modular forms is
a subset of the Fourier expansions modeled by formal Siegel modular
forms.  We sometimes take advantage of this fact in the
implementations of our algorithms.  At the same time, however, it respects as much structure as
needed to store the data associated to the modular form in a compact way.

\subsection{Basic Definitions}

We recall the definition of a Siegel modular form of degree two.  We
will consider the symplectic group $\textrm{Sp}(2,\Q)$ of degree $2$
over $\Q$; i.e., the group of matrices $M \in \textrm{M}(4, \Q)$ such
that ${}^t M J M = J$.  Here $J = \left(\begin{smallmatrix} & I_2 \\
    -I_2 & \end{smallmatrix}\right)$ represents the standard
symplectic form.  The full Siegel modular group $\textrm{Sp}(2, \Z) =
\textrm{Sp}(2, \Q) \cap \textrm{M}(4, \Z)$ will be the central object
of study.  For any $\Gamma\subset \textrm{Sp}(2,\Q)$ commensurable to
$\textrm{Sp}(2, \Z)$, let $\psi:\Gamma\to\Sone$ be a character and
$k,j$ be nonnegative integers.  Let
\begin{equation*}
  \PH_2
  :=
  \{Z=X+iY\in \textrm{M}(2,\C)\;:\;{}^tZ=Z,Y>0\}
\end{equation*}
be the Siegel upper half space of degree 2.  Assign to
$\C[X,Y]_j$, the space of homogeneous polynomials of degree $j$,
the $\GL[\C]$-action
\begin{equation}
  \label{eq:action}
  (A,p)\mapsto A\cdot p := p\bigl((X,Y)A\bigr).
\end{equation}

Now we can define Siegel modular forms:

\begin{definition}
  \label{def:smfs}
  A Siegel modular form of degree 2, weight $(k,j)$, and character
  $\psi$ on $\Gamma$ is a complex analytic function $F:\PH_2\to
  \C[X,Y]_j$ such that
  \begin{equation*}
    F(gZ)
    :=
    F\bigl((AZ+B)(CZ+D)^{-1}\bigr) = \psi(g)\,\det(CZ+D)^k\,(CZ+D)\cdot F(Z)
  \end{equation*}
  for all
  $g=\left(\begin{smallmatrix}A&B\\C&D\end{smallmatrix}\right)\in\Gamma$.
\end{definition}
The space of all such functions is denoted $M_{k,j}(\Gamma,\psi)$,
where we suppress $\psi$ if it is the trivial character and $j$ if it
is $0$. If $j$ is positive $F$ is called vector-valued,
otherwise it is called scalar-valued.
We use $M_*(\Gamma):=\bigoplus_{k} M_k(\Gamma)$ for the ring of (scalar-valued) Siegel modular
forms of degree 2 on $\Gamma$.

Common choices of $\Gamma$ and $\psi$ include $\Gamma = \Gamma_0(\ell)
:= \left(\begin{smallmatrix}\Z^{2\times 2} & \Z^{2\times 2}\\
    \ell\Z^{2\times 2} & \Z^{2\times 2}\end{smallmatrix}\right)\cap
\textrm{Sp}(2,\Z),$ and
$\psi\left(\begin{smallmatrix}A&B\\C&D\end{smallmatrix}\right):=\tilde{\psi}(\det
D)$ where $\tilde{\psi}$ is a Dirichlet character mod $\ell$.
A Siegel modular form on $\Gamma_0(\ell)$ and with such a character $\psi$
is commonly referred to as a Siegel modular form of level $\ell$ and
character $\tilde{\psi}$.


\subsection{Fourier Coefficients and Formal Siegel Modular Forms}
\label{ssec:formal}

Having defined analytic Siegel modular forms of degree 2, we now define a formal
Siegel modular form.  Every analytic Siegel modular form $F$ can be
identified by its Fourier expansion.  Because of a certain invariance
(see Proposition~\ref{prop:conclusion} for a precise statement) that
the coefficients of $F$ possess, the form $F$ can be identified by its
Fourier coefficients indexed on $\GL$-reduced quadratic forms.  So, we
define a formal Siegel modular form $C_F$ related to $F$ as a function
from integral binary quadratic forms to the representation space of
$F$.  

We denote by $Q$ the set of all integral binary quadratic forms that are
positive semidefinite:
\[
Q := \set{f = [a,b,c] \st a,b,c\in\Z,\; b^2-4ac\leq 0,\; a\geq 0}.
\]
For $f=[a,b,c]$ we use $M_f=\smat{a&b/2\\b/2&c}$.
The group $\GL$ acts on $Q$ by $M_{A\cdot f}=A\,M_f\,{}^t\!A$.

Let $F$ be a Siegel modular form on $\Gamma$.
It is not a serious restriction to assume, as we shall do for the
following, that
$P:=\left(\begin{smallmatrix} I_{2} & \Z^{2\times 2}\\
    0_{2}&I_{2}\end{smallmatrix}\right)\cap \textrm{Sp}(2,\Z) \subset
\Gamma,$ since this can always be achieved by passing from $\Gamma$ to
a $\textrm{Sp}(2,\Q)$-conjugate subgroup.  Moreover, we also assume that
$\psi$ is trivial on $P$.  The invariance of $F$
under the action of $P$ implies that $F$ is periodic and therefore has
a Fourier expansion of the form
\begin{align*}
  F(Z)
  &= \sum_{f=\qf abc\in Q} C_F(f)\,e\left(a\tau+bz+c\tau'\right) 
  \\
  &= \sum_{f=\qf abc\in Q} C_F(f)\,e\left(\tr (Z M_f)\right)
\end{align*}
Here $Z:=\left(\begin{smallmatrix} \tau &
    z\\z&\tau'\end{smallmatrix}\right)$ ($\tau,\tau'\in\PH_1$ and
$z\in\C$), $e(x)=e^{2\pi i x}$,
    and the trace of a matrix $A$ is denoted by $\tr A$.

Let $A^*:={}^t(A^{-1})$. Assume
\begin{equation}
  \label{eq:assumption}
  \left(\begin{smallmatrix} A^* & 0\\ 0 &
      A\end{smallmatrix}\right)\in\Gamma
  \text{ for some }A\in GL(2,\Z)
  .
\end{equation}
Then,
\begin{equation*}
    F(\smat{A^*&0\\0&A}Z)
  =
  F(A^*ZA^{-1})
  =
  \psi\left(\left(\begin{smallmatrix}A^* &
        0\\ 0 &A\end{smallmatrix}\right)\right)\det(A)^k\, A\cdot F(Z)
  .
\end{equation*}
In other words, equating the Fourier expansion of the left-hand side
to that of the right-hand side, we get
\begin{equation}
  \label{eq:coeffs}
  \sum_{f} C_F(f) \, e(\tr (A^* Z A^{-1}M_f))
  =
  \psi\left(\left(\begin{smallmatrix}A^* & 0\\ 0 &A\end{smallmatrix}\right)\right)
  \det(A)^k \, \sum_{f} A\cdot C_F(f) \, e(\tr( ZM_f)) .
\end{equation}
Using the fact that the trace is invariant under cyclic permutations
of the matrices being multiplied, we deduce
\begin{equation*}
  \tr(A^*Z A^{-1}M_f)=\tr(ZA^{-1}M_f A^*) = \tr(ZM_{A^{-1}\cdot f})
\end{equation*}
where the last equality follows from the definition of the $\GL$
action on $Q$.  Replacing $f$ by $A\cdot f$ in the left-hand side of
\eqref{eq:coeffs} yields the following:
\begin{proposition}
  \label{prop:conclusion}
  Let the notation be as above.  Then
  \begin{equation}
    \label{eq:conclusion}
    C_F(A\cdot f)
    =
    \psi\left(\left(\begin{smallmatrix}A^* & 0\\ 0 &A\end{smallmatrix}\right)\right)
    \det(A)^k\, A\cdot C_F(f).
  \end{equation}
  for all $A$ in $\GL$, such that $\left(\begin{smallmatrix}A^* & 0\\
      0 &A\end{smallmatrix}\right) \in \Gamma$.
\end{proposition}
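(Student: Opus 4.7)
The plan is to finish off the calculation that the authors have already set up in the lines immediately preceding the statement. All the essential ingredients are on the table: the transformation rule for $F$ (applied to $g=\smat{A^*&0\\0&A}$), the shape of the Fourier expansion of $F$, and the trace identity $\tr(A^*ZA^{-1}M_f)=\tr(ZM_{A^{-1}\cdot f})$ together with equation \eqref{eq:coeffs} that these produce. What remains is a change of summation variable on one side of \eqref{eq:coeffs}, followed by uniqueness of Fourier coefficients.

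Concretely, I would start from \eqref{eq:coeffs} and substitute $A\cdot f$ for $f$ in the dummy index of the sum on the left-hand side. Since $A\in\GL$, the map $f\mapsto A\cdot f$ is a bijection of $Q$ onto itself: the definition $M_{A\cdot f}=A M_f\,{}^t\!A$ preserves integrality of $M_f$ (up to the half-integer diagonal convention), preserves the discriminant because $\det A=\pm1$, and preserves positive semidefiniteness, so it also preserves the condition $a\ge0$. Applying the trace identity with $A\cdot f$ in place of $f$, the exponent $\tr(A^*ZA^{-1}M_{A\cdot f})$ collapses to $\tr(ZM_f)$, and the left-hand side of \eqref{eq:coeffs} is reindexed to $\sum_{f\in Q} C_F(A\cdot f)\,e(\tr(ZM_f))$.

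Both sides of \eqref{eq:coeffs} are now Fourier series in $Z$ of the form $\sum_{f\in Q}(\text{element of }\C[X,Y]_j)\cdot e(\tr(ZM_f))$. Because the characters $Z\mapsto e(\tr(ZM_f))$ for distinct $f\in Q$ are linearly independent (they are pairwise distinct characters of the lattice of periods coming from $P$), one may equate the coefficients attached to each $f$ on either side. Doing so yields \eqref{eq:conclusion} immediately, and the proof is complete.

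The hardest part, such as it is, is really just the bookkeeping: checking that the reindexing $f\mapsto A\cdot f$ is an honest bijection of $Q$ and invoking uniqueness of Fourier coefficients for vector-valued holomorphic functions on $\PH_2$. No deep obstacle arises; the content of the proposition is essentially that the transformation law for $F$ under the block-diagonal matrices in \eqref{eq:assumption} decouples, in Fourier space, into a per-coefficient statement on $Q$.
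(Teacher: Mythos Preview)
Your proposal is correct and follows exactly the paper's approach: the authors derive \eqref{eq:coeffs}, apply the trace identity to rewrite the exponent on the left-hand side as $\tr(ZM_{A^{-1}\cdot f})$, and then reindex by replacing $f$ with $A\cdot f$, which is precisely your argument. The only difference is that you spell out the bijectivity of $f\mapsto A\cdot f$ on $Q$ and the uniqueness of Fourier coefficients, which the paper leaves implicit.
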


This allows us to define a formal Siegel modular form.

\begin{definition}
  \label{def:fsmfs}
  Let $R$ be a module (or ring) with a left $\GL$ action, and let
  $\chi$ be a linear character of $\GL$.
  A formal Siegel modular form with values in $R$ and (formal) character $\chi$
  is a map $C:Q\to R$ for which
  \begin{equation}
      \label{eq:formal}
  C(A\cdot f) = \chi(A)\, A\cdot C(f)
  \end{equation}
  for all $A\in\GL$.
\end{definition}

By Proposition~\ref{prop:conclusion}, we see that this definition
models a Siegel modular form well, with the formal character
given by
\[
   \chi(A) = 
    \psi\left(\left(\begin{smallmatrix}A^* & 0\\ 0 &A\end{smallmatrix}\right)\right)
    \det(A)^k.
\]
Moreover, it is enough to store the
values of the coefficient map $C$ at the $\GL$-reduced elements of
$Q$, since the other coefficients can be computed
using~\eqref{eq:formal}.


We note that there are only 4 possibilities for the character of a
formal Siegel modular form:
\[
    \chi \in \set{ 1, \det, \sigma, \det\sigma}
\]
where $\sigma(A)$ is the sign of $A\bmod 2$ acting as a permutation on
the set $\set{(0,1),(1,0),(1,1)}$.
In fact, the group of linear characters of $\GL$ is non-cyclic of
order $4$ generated by $\det$ and $\sigma$.
This follows from the fact that the abelianization of $\GL$ is
isomorphic to the Klein four group.  Indeed, $\GL$ is generated by the
matrices $E=\smat{1&{}\\{}&-1}$, $T=\smat {1&1\\{}&1}$ and $S=\smat
{{}&{-1}\\1&{}}$. From $(ST)^3=S^2$ and $ETE=T^{-1}$ we deduce
$\pi(S)=\pi(T)^{-3}$ and $\pi(T)=\pi(T)^{-1}$, where $\pi$ is the
canonical map onto the abelianization of $\GL$. Therefore, the
abelianization is generated by $\pi(E)$ and $\pi(T)$, both of which
have order $1$ or $2$. On the other hand, since we have four
characters, the order of the abelianization is at least four. This
implies our claim.

For a Siegel modular form of level $\ell$, the formal character
can only be either $1$ or $\det$, depending only on the parity of the
weight $k$ and the parity of the character $\tilde\psi$.



\subsection{Formal Constructions}

In order to highlight the usefulness of thinking of Siegel modular
forms as purely formal objects, we now describe two analytic constructions from
this formal point of view.    

\subsubsection{Siegel $\Phi$ operator}  We now describe the Siegel $\Phi$ operator formally.  The
map $\Phi$ takes a degree $n$ Siegel modular form to a degree $n-1$
Siegel modular form.  In particular, cusp forms are in the kernel of
$\Phi$ and Eisenstein series are mapped to Eisenstein series.  The Siegel $\Phi$ operator for degree 2 Siegel modular forms has a very simple formula in terms of its
singular Fourier coefficients:
\begin{equation}
    \Phi(F) := \sum_{n\geq 0} C_F([0,0,n])\,q^n\,.
\end{equation}



\subsubsection{Hecke Operators}
In this section we give formulas for the image of a Siegel
modular form of level $N$ and weight $(k,j)$.  We give two
formulas.  The first is for the operator $T(p^\delta)$ when $p$ and $N$ are
coprime.  The second is for the operator $U(p)$ when $p$ divides the
level.

Let $F$ be a Siegel modular form as above.  Let $C$ be its
corresponding formal Siegel modular form and let $C'$ be the formal
Siegel modular form that corresponds to the image of $F$ under
$T(p^\delta)$. Then, as found in \cite{IbukiyamaHecke},
\[
C'([a,b,c])=\sum_{\alpha+\beta+\gamma=\delta}p^{\beta(k+j-2)+\gamma(2k+j-3)}
\sum_{\substack{U\in R(p^\beta)\\a_U \equiv
    0\,(p^{\beta+\gamma})\\ b_U \equiv c_U\equiv 0\,(p^{\gamma})}} (d_{0,\beta}U) \cdot C\left(p^{\alpha}\left[\frac{a_U}{p^{\beta+\gamma}},\frac{b_U}{p^\gamma},\frac{c_U}{p^{\gamma-\beta}}\right]\right)
\]
where $R(p^\beta)$ is a complete set of representatives for
$\SL/\Gamma^{(1)}_0(p^\beta)$; where, for $f=[a,b,c]$,
$[a_U,b_U,c_U]=f_U:=f\left((X,Y){}^tU\right)$; where $d_{0,\beta} =
\left(\begin{smallmatrix}1 & \\ & p^\beta\end{smallmatrix}\right)$;
and where the dot denotes the action of $d_{0,\beta}U$ as defined
in~\eqref{eq:action}.
We note that if $F$ is scalar-valued this action is trivial.

For primes $p$ dividing the level of a scalar-valued Siegel modular
form $F$, there is an operator $U(p)$.  In
\cite{Bocherer2}, B\"ocherer gives an easy explicit formula for the
action of $U(p)$ on a Siegel modular form.  In particular,
let $C$ and $C'$ be formal Siegel modular forms corresponding to $F$
and the image of $F$ under $U(p)$.  Then
\[
C'[(a,b,c)] = C[(pa,pb,pc)].
\]

\section{Recipes and Algorithms}\label{sec:algorithms}

The generators of rings of Siegel modular forms of degree 2 are often
given in one of the following ways: a linear combination of theta
series, a linear combination of products of theta constants, or, if
the form is of level 1, it may be given in terms of the Igusa
generators.  Additionally, particular Siegel modular forms of degree 2
can also be written in terms of Maass lifts, Eisenstein series, or
Borcherds products.  In this section, we present the definitions of
all these terms and algorithms or recipes that can be used to compute
them (or, at the very least, provide references to such methods). 

\subsection{Eisenstein Series}
\label{subsec:Eisenstein-series}
We define an Eisenstein series of degree 2 by
\[
E_k(Z) = \sum_{\{C,D\}} \det(CZ+D)^{-k}
\]
where the sum is over nonassociated symmetric coprime matrices $C,D\in
\Mat{2}(\Z)$ (here, two integral matrices $A$ and $B$ are called
coprime if the matrix products $GA$ and $GB$ are integral when and
only when $G$ is integral).  Earlier work by Maass \cite{Maass64} and
Mizumoto \cite{Mizumoto81,Mizumoto84} give explicit formulas for the
Fourier coefficients of Eisenstein series of level 1.  We describe
another approach here.

In \cite{EichlerZagier} one can find a
formula for the Fourier coefficients of a Siegel Eisenstein series of degree 2.
The formula is in
terms of Cohen's function which we define now.  Let
\[
L_D(s) = 
\begin{cases}
0 & \text{if }D\not\equiv 0,1\pmod{4}\\
\zeta(2s-1) &\text{if } D=0 \\
L(s,\kro{D_0}{\cdot})\sum_{d\isdiv
  f}\mu(d)\kro{D_0}{d}d^{-s}\sigma_{1-2s}(f/d) &\text{if }
D\equiv0,1 \pmod{4}\text{ and }D\neq 0\\
\end{cases}
\]
where $D$ has been written as $D=D_0f^2$ with $f\in \N$, $D_0$ is the
discriminant of $\Q(\sqrt{D})$ and $\sigma_k(n)=\sum_{d\isdiv n}
d^k$.  We define Cohen's function by $H(k-1,N) = L_{-N}(2-k)$.  

Now suppose $C_k$ is the formal Siegel modular form corresponding to
the Eisenstein series $E_k$.  Then, according to \cite[Cor. 2, p. 80]{EichlerZagier}, we have
\[
C_k([a,b,c]) = \sum_{d\isdiv (a,b,c)}d^{k-1}H\left(k-1,\frac{4ac-b^2}{d^2}\right).
\]
We observe that the coefficient of a singular form $C_k([0,0,c])$ is
$\zeta(3-2k)\sigma_k(c)$, which is the $c$th coefficient of the
Eisenstein series of degree 1 and weight $2k-2$.  We note that one can
compute Cohen's function within Sage.  

We remark that the weight $k$ Eisenstein series can also be computed
as the Saito-Kurokawa lift of the degree 1, weight $2k-2$ Eisenstein
series.  In fact, that's where the formulas in this section come
from.  See Section~\ref{ssec:SK} for more details on how to compute
Saito-Kurokawa lifts.

One can also consider Eisenstein series with level.  Mizuno
\cite[Theorem 1]{Mizuno} gives a very complicated formula for
Eisenstein series of degree 2 and odd squarefree level.  The formulas
consists of special values of L-functions, Gauss sums, and the like.
We refer the interested reader to that paper for details.

\subsection{Klingen-Eisenstein Series}

Given a modular form $f$ of
degree $n_1 \le n$ and  weight $k > n + n_1 + 1$ the
Klingen-Eisenstein series of $f$ is a modular
form $E_{n, n_1}(f)$ of degree $n$ and weight $k$ such that the Siegel $\Phi$
operator maps $E_{n, n_1}(f)$ to $f$.  More precisely, it is given by
the summation
\begin{align}
\label{eq:klingeneisenstein_def}
  E_{n,n_1}(f)
& =
  \sum_{M \in \Gamma_{n_1,\infty}^{(n)} \backslash \Gamma^{(n)}} f |M
\text{,}
\end{align}
where $f$ is considered a function on $\PH_n \hookleftarrow \PH_{n_1}$ and $\Gamma_{n_1, \infty}^{(n)}$ is the $n_1$-parabolic subgroup
\begin{align}\label{eq:parabolic subgroup}
  \Gamma_{n_1,\infty}^{(n)}
& := \{ \left(\begin{smallmatrix} A & B \\ C & D\end{smallmatrix}\right) \in \Gamma^{(n)} \;:\;
        C_{ij} = 0 \text{ for }i > n_1, \text{ all }j,\;
        D_{ij} = 0 \text{ for }i > n_1,\, j \le n_1\}
\text{,}
\end{align}
where the $ij$th entry of a matrix $M$ is denoted $M_{ij}$.

We restrict to $n = 2$.  Then $E_{2,0}(f) = E_2$ and $E_{2,2}(f)=f$ as
can be seen from the easy way in which the corresponding
$n_1$-parabolic subgroups can be described.  So
we have to give formulas for $E_{2,1}$ and this was done by Mizumoto in
\cite{Mizumoto81,Mizumoto84}.  We can restrict to the case that
$f$ is an eigenform.  Then, we can assume that $f$ is cuspidal, since
otherwise $E_{2,1}(f)$ is a classical Eisenstein series.

Mizumoto gives an explicit formula for the primitive coefficients of
a Klingen-Eisenstein series in terms of the coefficients of $f$ and
special values of three different L-functions.  We do not present this
explicit formula, but note, instead, that one can easily identify the
Klingen Eisenstein series among a basis of eigenforms.  And this is
how we compute their Fourier expansions in practice.

\subsection{Maass Lifts}\label{ssec:SK} This lift can be
thought of as a modular form that can, in some sense, be computed
easily from modular forms that are easier to compute.  In particular,
it can be derived from Jacobi forms and elliptic modular forms.  The
standard reference for Jacobi forms is \cite{EichlerZagier}.

\begin{definition}
  A Jacobi form of level 1, weight $k$ and index $m$ is a function
  $\phi(z,\tau)$ for $z\in \PH_1$ and $\tau\in \C$ such that:
  \begin{enumerate}
  \item
    $\phi\left(\frac{az+b}{cz+d},\frac{\tau}{cz+d}\right)=(cz+d)^ke^{\frac{2\pi
        i m c \tau^2}{cz+d}}\phi(\tau,z)$ for
    $\left(\begin{smallmatrix} a&
        b\\c&d\end{smallmatrix}\right)\in\SL$;
  \item $\phi(z,\tau+ \lambda z+\mu)=e^{-2\pi i m(\lambda^2z+2\lambda
      \tau)}\phi(z,\tau)$ for all integers $\lambda,\mu$; and
  \item $\phi$ has a Fourier expansion
    \[
    \phi(z,\tau)=\sum_{n\geq 0}\sum_{r^2\leq 4nm} d(n,r)q^n\zeta^r.
    \]
  \end{enumerate}
\end{definition}

A Maass lift \cite{Maass} is a lifting from the space of Jacobi forms
of weight $k$ and index 1 to the space of Siegel modular forms of
degree 2.

\begin{definition} \label{def:maass} Let $\phi$ be a Jacobi cusp form
  of weight $k$ and index $1$ and suppose
  $\phi(\tau,z)=\sum_{n>0,r\in\Z}d(n,r)q^n\zeta^r$ is its Fourier
  expansion.  Then the Maass lift of $\phi$ is $\text{Maass}(\phi)$ a
  Siegel modular form of degree 2 and weight $k$ given by
  \begin{equation*}
    \text{Maass}(\phi)\left(\begin{smallmatrix} \tau & z\\z& w\end{smallmatrix}\right) = \sum_{a,b,c}\left(\sum_{\delta|(a,b,c)}\delta^{k-1}d\left(\frac{ac}{\delta^2},\frac{b}{\delta}\right)\right)q^{a}\zeta^b q'^c.
  \end{equation*}
\end{definition}
We observe that $\text{Maass}(\phi)$ corresponds to a formal Siegel
modular form $C$ and we compute $C$ coefficient by coefficient:
\[
C_\phi([a,b,c]) =
\sum_{\delta\isdiv(a,b,c)}\delta^{k-1}d\left(\frac{ac}{\delta^2},\frac{b}{\delta}\right),
\]
where $d(n,r)$ are the coefficients of the Jacobi form $\phi$.

A map from elliptic modular forms of level 1 to Jacobi forms can be found in
\cite{Skoruppa}; we note that this is not the Saito-Kurokawa lift as
it is not Hecke invariant.  We summarize the result here.  Let
$\eta:=q^{1/24}\prod_{n=1}^\infty (1-q^n)$ and
\begin{align*}
  A&:=\eta^{-6}\sum_{r,s\in \Z, r\not\equiv s\pmod 2} s^2(-1)^rq^{(s^2+r^2)/4}\zeta^r,\text{ and}\\
  B&:=\eta^{-6}\sum_{r,s\in \Z, r\not\equiv s\pmod 2}
  (-1)^rq^{(s^2+r^2)/4}\zeta^r\\.
\end{align*}
For $f$ a degree 1, weight $k$ modular form and $g$ and degree 1 and
weight $k+2$ cusp form, the map, $I: (f,g)\mapsto
\frac{k}{2}fA-(q\frac{d}{dq}f)B+gB$ is a Jacobi form of weight $k$ and
index 1.  

The composition $\text{Maass}\circ I$ yields explicit formulas for the
Fourier coefficients of the forms in the Maass spezialschaar, i.e.~in
the image of the Maass lift, in terms of the Fourier coefficients of
elliptic modular forms of level~$1$. The formulas for the Fourier
coefficients of the Siegel Eisenstein series in
Section~\ref{subsec:Eisenstein-series} are special cases of this
construction.

\subsection{Theta Series}\label{ssec:theta-series}

Let $q$ be a positive definite quadratic form on a lattice $L$ of rank
$n$ and $b(v,w):=q(v+w)-q(v)-q(w)$ be its associated bilinear form. We
assume $q(L)\subseteq\Z$.  The theta series of degree $2$ associated
to $L$ can be defined as
\begin{equation*}
  \theta^{(2)}_L(Z) := \sum_{(v,w)\in L\times L}\, e\left(\tr (Z
    M_{[v,w]})\right)
\end{equation*}
where $M_{[v,w]} := M_f$ for the binary quadratic form $f = q(xv +
yw)$.  It is well known that this series is a (scalar valued) Siegel
modular form of degree 2 and weight $n/2$, for the level of the
lattice and a quadratic character related to the determinant
of the bilinear form.

The Fourier coefficients of
$\theta^{(2)}_L$ are related to representation numbers of binary
quadratic forms by $q$. Namely,
\begin{equation}
  \label{eqn:theta2coeff}
  C(f) = \#\left\{(v,w)\in L\times L \;:\; f=q(xv+yw)\right\}
  .
\end{equation}
The standard properties of representation numbers imply that $C(A\cdot f) =
C(f)$ for $A\in\GL$.  So, in particular, a theta series is a formal
Siegel modular form with trivial formal character.
 
The following algorithm computes the Fourier coefficients $C(f)$ of a
theta series of degree 2, for $\GL$-reduced binary quadratic forms
$f$.

\begin{algorithm}\label{alg:thetaseries}
  Compute the Fourier coefficients $C(f)$ of $\theta^{(2)}_L$ for $0 >
  \disc f \geq -X$ as well as $C([0,0,c])$ for $c\leq c_{max}$ where $c_{max}$ is $\left\lfloor
    \frac{X+1}{4}\right\rfloor$.
  \begin{enumerate}
  \item[1.] [Precompute] For $1\leq i\leq c_{max}$, compute $V_i := \{
    v\in L \;:\; q(v) = i\}$ and $\tilde{V}_i\subset V_i$ such that
    for every vector $v\in V_i$, exactly one of $v$ or $-v$ is in
    $\tilde{V}_i$.
  \item[2.] [Compute the singular coefficients] $C(\qf 000) = 1$ and
    for $1\leq i\leq c_{max}$ set $C(\qf 00i) = 2\cdot\#\tilde{V}_i$ .
  \item[3.] [Iterate through all $\GL$-reduced forms] For each $1 \leq
    a \leq \left\lfloor \sqrt{\frac{X}{3}} \right\rfloor$ and for each
    $a\leq c \leq \left\lfloor \frac{a^2+X}{4a}\right\rfloor$, look up
    $V_a$ and $V_c$.
  \item[4.] [Iterate through vectors of shortest length] For $v\in
    V_a$, compute the linear form $l(w) = b(v,w)$.  For $w \in V_c$,
    compute $b = l(w)$ and $\lambda(b) := \begin{cases}
      2 & \text{if $b\neq 0$,} \\
      4 & \text{if $b=0$.}
    \end{cases}$
  \item[5.] [Compute coefficient $C[a,b,c])$] If $b\leq a$ and
    $b^2-4ac\geq -X$, then $C[(a,b,c)]=C[(a,b,c)] + \lambda(b)$.
  \item[6.] [Done] Return the map $C(f)$.
  \end{enumerate}

\end{algorithm}
The algorithm is implemented in Sage a little differently (in
particular the loops are in a different order in order to avoid
computing things we do not really need to compute).

\subsection{Theta constants}\label{ssec:theta-constants}  As we will see below in
Section~\ref{sec:rings}, being able to compute the products of theta constants is a very useful way to compute particular modular forms.
For $a,b\in \{0,1\}^2$ (column
vectors), define the theta constant $\theta_{a,b}$ as
\begin{equation}\label{eq:thetac}
 \theta_{a,b}(Z)
 =
 \sum_{\begin{subarray}{c}\ell\in\Z^2\\ l\equiv a\bmod 2
   \end{subarray}
 }e\left(\frac{1}{8} Z[\ell]+\frac{1}{4}{}^t\ell b\right)
\end{equation}
where we use $A[B] = {}^tBAB$. It is easy to see that~$\theta_{a,b}$
is identically zero if ${}^ta b$ is odd.  We note that an individual
theta constant is not, in general, a formal Siegel modular form as can be seen from
the following lemma:
\begin{lemma}
For $A\in\GL$, $f \in Q$, we have:
\[
C_{a,b} ( A\cdot f) = C_{A^{-1}a,{}^tAb} (f).
\]
\end{lemma}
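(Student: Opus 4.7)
The plan is to mimic the derivation of Proposition~\ref{prop:conclusion}, starting from a functional equation for $\theta_{a,b}$ under the substitution $Z \mapsto A^* Z A^{-1}$ and then matching Fourier coefficients on both sides.

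First I would establish the identity
\begin{equation*}
  \theta_{a,b}(A^* Z A^{-1}) = \theta_{A^{-1} a,\,{}^t\!A b}(Z)
\end{equation*}
for every $A \in \GL$, by a change of variables in the defining sum~\eqref{eq:thetac}. Since $A^* = {}^t(A^{-1})$, one computes $(A^* Z A^{-1})[\ell] = {}^t\ell\, A^* Z A^{-1}\ell = Z[A^{-1}\ell]$. Because $A\in\GL$ is a bijection of $\Z^2$, the substitution $\ell' = A^{-1}\ell$ converts the index condition $\ell \equiv a \bmod 2$ into $\ell' \equiv A^{-1} a \bmod 2$ and rewrites the linear term as $\tfrac{1}{4}\,{}^t\!\ell\, b = \tfrac{1}{4}\,{}^t\!\ell' ({}^t\!A b)$. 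This yields exactly the right-hand side of the displayed identity.

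Next I would extract the coefficient identity by expanding both sides as Fourier series in $Z$. On the right, $\theta_{A^{-1} a,\,{}^t\!A b}(Z) = \sum_{f} C_{A^{-1} a,\,{}^t\!A b}(f)\, e(\tr(Z M_f))$. On the left, using $\tr(A^* Z A^{-1} M_f) = \tr(Z M_{A^{-1}\cdot f})$ from the preamble of Proposition~\ref{prop:conclusion}, the expansion becomes $\sum_{f} C_{a,b}(f)\, e(\tr(Z M_{A^{-1}\cdot f}))$, and reindexing via $f \mapsto A\cdot f$ transforms this into $\sum_{f} C_{a,b}(A\cdot f)\, e(\tr(Z M_f))$. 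Equating coefficients at each $f$ yields the claim.

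I do not expect a significant obstacle. The only subtlety worth noting is that Fourier coefficients of theta constants are naturally indexed by half-integer symmetric matrices of the form $\tfrac{1}{8}\ell\,{}^t\!\ell$, which is slightly finer than $Q$; however, since the functional equation is an identity of analytic functions of $Z$, matching coefficients is valid on whichever index set one chooses, with $C_{a,b}(f)$ interpreted as $0$ whenever no admissible $\ell$ realizes the index $M_f$.
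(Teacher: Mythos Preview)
The paper states this lemma without proof, so there is nothing to compare against directly. Your argument is correct: the substitution $\ell' = A^{-1}\ell$ in~\eqref{eq:thetac} gives the functional equation $\theta_{a,b}(A^*ZA^{-1})=\theta_{A^{-1}a,\,{}^t\!Ab}(Z)$, and matching Fourier coefficients exactly as in the derivation of Proposition~\ref{prop:conclusion} yields the claim.

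One minor point of precision: by Proposition~\ref{prop:theta-constant}, the map $C_{a,b}$ is the coefficient function of $\theta_{a,b}(8Z)$ rather than of $\theta_{a,b}(Z)$, so your displayed Fourier expansions should carry the rescaling by~$8$. This is harmless, since the functional equation you establish transfers verbatim to $\theta_{a,b}(8Z)$, and in fact it dissolves the concern in your final paragraph: after the rescaling the Fourier indices are the discriminant-zero forms $\ell^2 \in Q$, with no half-integrality issue. An equivalent one-line route, once the explicit formula of Proposition~\ref{prop:theta-constant} is in hand, is to make the same substitution $\ell \mapsto A\ell'$ directly in that finite sum: the condition $\ell^2 = A\cdot f$ becomes $(\ell')^2 = f$ (since $M_{\ell^2}=\ell\,{}^t\!\ell$ and $M_{A\cdot f}=A M_f\,{}^t\!A$), the congruence becomes $\ell'\equiv A^{-1}a\ (2)$, and ${}^t\ell\,b = {}^t\ell'({}^t\!Ab)$.
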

Note for example, that $\theta_{0000}(Z)$ and
$\theta_{0010}(Z)+\theta_{0001}(Z)+\theta_{0011}(Z)$ are formal Siegel
modular forms but $\theta_{0010}(Z)$ is not.

We have the following result that allows for the computation of the
coefficients of a single theta constant.  As noted above, this form is
not necessarily a formal Siegel modular form.
\begin{proposition}
  \label{prop:theta-constant}
  Assume that ${}^ta b$ is even. Then the map $C_{a,b}:Q\to \Z$ given
  by
  \begin{equation*}
    C_{a,b}(f)
    =
    \sum_{\substack{\ell \equiv a \bmod{2}\\ \ell^2 = f}}(-1)^{\ell(b)/2}
  \end{equation*}
  describes the coefficients of $\theta_{a,b}(8Z)$.  Here $Q$ is as defined in
  Section~\ref{ssec:formal}, $a={}^t(a_1,a_2)$ is identified with the
  linear polynomial $a_1X+a_2Y$ and the sum is over all linear forms
  $\ell$ in $\Z[X,Y]$ which are congruent to $a_1X+a_2Y$ mod 2 and
  whose square equals $f$.
\end{proposition}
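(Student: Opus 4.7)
The proof is essentially a direct computation starting from the definition of $\theta_{a,b}$ in \eqref{eq:thetac}. The plan is to substitute $8Z$ into the definition, recognize the quadratic exponent as a trace against $M_f$ for suitable $f$, and then verify that the exponential in ${}^t\ell b$ collapses to the claimed sign under the parity hypothesis.

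First I would substitute $8Z$ into \eqref{eq:thetac} to obtain
\begin{equation*}
  \theta_{a,b}(8Z) = \sum_{\ell \equiv a \bmod 2} e\bigl(Z[\ell]\bigr)\, e\bigl(\tfrac{1}{4}{}^t\ell\, b\bigr),
\end{equation*}
since $\tfrac{1}{8}(8Z)[\ell] = Z[\ell]$. Writing $\ell = {}^t(\ell_1, \ell_2)$ and $Z = \smat{\tau & z \\ z & \tau'}$, one has $Z[\ell] = \ell_1^2 \tau + 2\ell_1\ell_2 z + \ell_2^2 \tau'$, which under the identification of $\ell$ with the linear form $\ell_1 X + \ell_2 Y \in \Z[X,Y]$ and $\ell^2 \in Q$ with the quadratic form $[\ell_1^2, 2\ell_1\ell_2, \ell_2^2]$ equals $\tr(Z M_{\ell^2})$. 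Hence
\begin{equation*}
  \theta_{a,b}(8Z) = \sum_{\ell \equiv a \bmod 2} e\bigl(\tfrac{1}{4}{}^t\ell\, b\bigr)\, e\bigl(\tr(Z M_{\ell^2})\bigr).
\end{equation*}

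Next I would handle the character factor. Since $\ell \equiv a \bmod 2$, we have $\ell = a + 2m$ for some $m \in \Z^2$, so ${}^t\ell\, b = {}^t a\, b + 2\,{}^t m\, b$, which is even by the hypothesis that ${}^t a\, b$ is even. Setting $k := {}^t\ell\, b \in 2\Z$, one computes $e(k/4) = e^{\pi i k/2} = (-1)^{k/2}$, i.e.\ $e(\tfrac{1}{4}{}^t\ell\, b) = (-1)^{\ell(b)/2}$ since $\ell(b) = {}^t\ell\, b$ under the identification above.

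Finally I would regroup the sum by the value $f = \ell^2 \in Q$, noting that the exponentials $e(\tr(Z M_f))$ are linearly independent, so that the Fourier coefficient attached to $f$ is exactly
\begin{equation*}
  C_{a,b}(f) = \sum_{\substack{\ell \equiv a \bmod 2 \\ \ell^2 = f}} (-1)^{\ell(b)/2},
\end{equation*}
as claimed. The only subtle point is the parity check ensuring $(-1)^{\ell(b)/2}$ is well-defined, but this follows immediately from the hypothesis on $a, b$ together with $\ell \equiv a \bmod 2$; everything else is bookkeeping translating between the vector $\ell$, the linear form $\ell_1 X + \ell_2 Y$, and the quadratic form $\ell^2 \in Q$.
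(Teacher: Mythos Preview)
Your proof is correct and follows exactly the approach the paper indicates: expand $Z[\ell]=\ell_1^2\tau+2\ell_1\ell_2 z+\ell_2^2\tau'$ to identify the Fourier index as $\ell^2$, and use the parity hypothesis to reduce $e(\tfrac14{}^t\ell b)$ to the sign $(-1)^{\ell(b)/2}$. The paper's justification is a single sentence to this effect; you have simply written out the details.
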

The claimed formula becomes obvious if one writes
$\ell={}^t(\ell_1,\ell_2)$ and $Z[\ell]=\ell_1^2\tau+2\ell_1\ell_2z+\ell_2^2\tau'$.

Particular Siegel modular forms are often described as an algebraic
combination of theta constants.  The following proposition allows for
the computation of the Fourier coefficients of  such forms.
\begin{proposition}\label{prop:products-theta-constants}
  Let
  \begin{equation*}
    F(Z)
    :=
    \sum_{i=1}^n\alpha_i \prod_{j=1}^d \theta_{a_{ij},b_{ij}}(Z),
  \end{equation*}
  where we assume that all ${}^ta_{ij}b_{ij}$ are even.  Then the
  map $C:Q\to \Z$ associated to $F(8Z)$ is given by
  \begin{equation*}
    C(f)
    =
    \sum_{i=1}^{n-1}\alpha_i
    \sum_{\substack{\ell_1,\dots,\ell_d\equiv a_{i1},\dots,a_{id}\bmod{2}\\\ell_1^2+\cdots+\ell_d^2 = f}}
    (-1)^{(\ell_1(b_{i1})+\cdots+\ell_d(b_{id}))/2}.
  \end{equation*}
\end{proposition}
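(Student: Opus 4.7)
The plan is to combine Proposition~\ref{prop:theta-constant} with the standard convolution formula for products of Fourier series.

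First I would treat one product $\prod_{j=1}^d \theta_{a_{ij},b_{ij}}(8Z)$ at a time. By Proposition~\ref{prop:theta-constant}, each factor $\theta_{a_{ij},b_{ij}}(8Z)$ has Fourier expansion indexed by $Q$ with coefficients
\begin{equation*}
  C_{a_{ij},b_{ij}}(f_j) = \sum_{\substack{\ell_j\equiv a_{ij}\bmod 2\\ \ell_j^2=f_j}}(-1)^{\ell_j(b_{ij})/2}.
\end{equation*}
Since $e\bigl(\tr(ZM_{f_1})\bigr)\cdot e\bigl(\tr(ZM_{f_2})\bigr) = e\bigl(\tr(Z(M_{f_1}+M_{f_2}))\bigr) = e\bigl(\tr(ZM_{f_1+f_2})\bigr)$, multiplication of Fourier series corresponds to additive convolution on $Q$. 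So the $f$-th coefficient of $\prod_{j=1}^d \theta_{a_{ij},b_{ij}}(8Z)$ equals
\begin{equation*}
  \sum_{f_1+\cdots+f_d=f}\;\prod_{j=1}^d C_{a_{ij},b_{ij}}(f_j).
\end{equation*}

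Next I would expand each inner factor using the formula above and interchange the (finite) sums with the product. A choice of $f_1,\dots,f_d\in Q$ summing to $f$ together with linear forms $\ell_j\equiv a_{ij}\bmod 2$ satisfying $\ell_j^2=f_j$ is the same as a tuple $(\ell_1,\dots,\ell_d)$ with $\ell_j\equiv a_{ij}\bmod 2$ and $\ell_1^2+\cdots+\ell_d^2=f$, since the individual $f_j=\ell_j^2$ are recovered from the $\ell_j$. The sign collapses by multiplicativity to $(-1)^{(\ell_1(b_{i1})+\cdots+\ell_d(b_{id}))/2}$, which gives the claimed inner sum for each $i$. Finally, taking the $\alpha_i$-linear combination yields the stated formula for $C(f)$.

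There is essentially no obstacle; the argument is a bookkeeping exercise. The one thing to be careful about is the correspondence between binary quadratic forms and symmetric $2\times 2$ matrices via $f\mapsto M_f$ — in particular, checking that the product of exponentials $e(\tr(ZM_{f_1}))e(\tr(ZM_{f_2}))$ is indexed by $f_1+f_2\in Q$ (so that the convolution is supported inside $Q$, which follows since $Q$ is closed under addition), and verifying that $\ell_j^2$, interpreted as the quadratic form $(\ell_{j,1}X+\ell_{j,2}Y)^2$, corresponds to the rank-one matrix $\ell_j\,{}^t\!\ell_j$ so that $\sum_j \ell_j^2 = f$ really means $\sum_j \ell_j\,{}^t\!\ell_j = M_f$. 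Both identifications are immediate from the conventions fixed in Section~\ref{ssec:formal}. (The upper limit $n-1$ in the display of the statement appears to be a typographical slip for $n$, which the proof naturally produces.)
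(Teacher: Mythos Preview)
Your argument is correct and is exactly the natural one: apply Proposition~\ref{prop:theta-constant} to each factor and use that multiplication of Fourier series corresponds to additive convolution on $Q$. The paper itself does not supply a proof of this proposition; it treats it as an immediate consequence of the single-factor formula, which is precisely the route you take. Your observation about the upper limit $n-1$ being a typographical slip for $n$ is also correct.
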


The $C$ in the previous two propositions can be formal Siegel modular
forms.  In that case we only compute the values of $C$ at
$\GL$-reduced quadratic forms in $Q$, taking advantage of the extra
structure a formal Siegel modular form has.  We note, also, that
whether or not $C$ is a formal Siegel modular form has to be
determined before the computations are carried out.

The above definitions and propositions can be generalized to other
levels.  One should change these results in the expected way:  e.g.,
the sums are mod $N$, the forms $\theta_{a,b,N}(2N^2Z)$ are formal
Siegel modular forms, etc. In particular, we write
\begin{equation}\label{eq:thetac-levelN}
  \theta_{N,a,b}(Z)
  =
  \sum_{\begin{subarray}{c}\ell\in\Z^2\\ \ell\equiv a\bmod N
    \end{subarray}
  }e\left(\frac{1}{2 N^2} Z[\ell]+\frac{1}{N^2}{}^t\ell b\right),
\end{equation}
where again we use $A[B] = {}^tBAB$. 

\begin{remark}
A useful observation is that one can always take $b=0$ to get the
space spanned by theta constants.  In particular, the span of
$\theta_{a,b,N}(Z)$ is the same as the span of $\theta_{a,0,N}(Z)$.
\end{remark}


\subsection{Vector-valued theta series}
\label{ssec:vvthetaseries}

Following \cite{IbukiyamaSym46} we can define vector-valued theta
series.  Take a rank $n$ even unimodular lattice $L$, let
$P\in\C[X,Y]_j$ be pluriharmonic (i.e., for every $A\in\GL$,
$P(AX)=\det(A)^k\textrm{Sym}_j(A)P(X)$).  Then we define
\[
\theta_{L,P}(Z)
=
\sum_{x,y\in
  L}P\left(\begin{smallmatrix}x\\y\end{smallmatrix}\right)e^{\pi i
    ((x,x)\tau+(x,y)z+(y,y)\tau')}.
\]
One can show that $\theta_{L,P}\in M_{n/2+k,j}(\Gamma)$.  After
collecting terms, $\theta_{L,P}$ is evidently a formal Siegel modular
form because of the invariance of the pluriharmonic polynomials that
make up the coefficients.

To describe the generators of a certain space of vector-valued Siegel
modular forms, it is useful to define the following two pluriharmonic
polynomials.  Let $a,b\in \C^n$ be such that their inner products
$(a,a)=(a,b)=(b,b)=0$ (here the inner product of $a$ and $b$ is given
by $\sum a_ib_i$, without complex conjugation).  Consider the two polynomials
\begin{align*}
P_a^{(j)} &= \sum_{\nu = 0}^k(x,a)^{(j-\nu)}(y,a)^\nu X^{\nu}Y^{j-\nu}\\
P_{a,b,k}^{(j)} &= \sum_{\nu = 0}^k(x,a)^{(j-\nu)}(y,a)^\nu \det\begin{pmatrix}(x,a)&(y,a)\\(x,b)&(y,b)\end{pmatrix}^kX^{\nu}Y^{j-\nu}.
\end{align*}
With this notation, $\theta_{oL,P_a^{j} }\in M_{d/2,j}(\Gamma)$,
  $\theta_{L,P_{a,b,k}^{(j)}}\in M_{d/2+k,j}(\Gamma)$.

In order to compute these theta series, one needs to adjust
Algorithm~\ref{alg:thetaseries} appropriately. 




\subsection{Wronskians}\label{ssec:wronskians}

We recall a useful construction in \cite{Ibukiyama}.  A Siegel modular
form of degree 2 is a function in three variables, $\tau$, $z$ and
$\tau'$.  Suppose we have 4 algebraically independent Siegel modular
forms $F_1,F_2,F_3,F_4$ of weights $k_1,k_2,k_3,k_4$, respectively.
Then the function
\begin{equation}\label{eq:wronskian}
  \{F_1,F_2,F_3,F_4\}:=
  \left |
    \begin{matrix}
      k_1F_1& k_2F_2 & k_3F_3 &k_4F_4\\
      \frac{\partial F_1}{\partial \tau} &\frac{\partial F_2}{\partial \tau} &\frac{\partial F_3}{\partial \tau} &\frac{\partial F_4}{\partial \tau} \\
      \frac{\partial F_1}{\partial z} &\frac{\partial F_2}{\partial z} &\frac{\partial F_3}{\partial z} &\frac{\partial F_4}{\partial z} \\
      \frac{\partial F_1}{\partial \tau'} &\frac{\partial F_2}{\partial \tau'} &\frac{\partial F_3}{\partial \tau'} &\frac{\partial F_4}{\partial \tau'} \\
    \end{matrix}
  \right |
\end{equation}
is a nonzero Siegel cusp form of weight $k_1+k_2+k_3+k_4+3$.  See
\cite[Prop. 2.1]{Ibukiyama} for more details and for the statement for
arbitrary degree.

Computationally speaking, computing the determinant
$\{F_1,\dots,F_4\}$ in the straightforward way is very expensive.
Many products (36, computing the determinant in the naive way) of
multivariate power series would need to be computed in order to find
$F$. However, using basic properties of the determinant, one deduces
\begin{proposition}
  \label{prop:chi35}
  With notation as above, we have
  \begin{multline}
    \label{eq:fastchi35}
    F=\{F_1,F_2,F_3,F_4\}= \sum_{\substack{T_1 = [a_1,b_1,c_1]\geq 0\\
        T_2 = [a_2,b_2,c_2]\geq 0\\
        T_3 = [a_3,b_3,c_3]> 0\\
        T_4 = [a_4,b_4,c_4]> 0 }}
    C_{F_1}(T_1)C_{F_2}(T_2)C_{F_3}(T_3)C_{F_4}(T_4)\\
    \times \left | \begin{matrix}k_1&k_2&k_3&k_4\\ a_1&a_2&a_3&a_4\\
        b_1&b_2&b_3&b_4\\ c_1&c_2&c_3&c_4\end{matrix}\right |e(\sum
    a_i \tau + \sum b_i z +\sum c_i \tau').
  \end{multline}
\end{proposition}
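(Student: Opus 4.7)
The plan is to prove (\ref{eq:fastchi35}) by substituting the Fourier expansion of each $F_i$ into the $4\times 4$ determinant (\ref{eq:wronskian}) and then exploiting multilinearity of the determinant in its columns. Concretely, writing
\[
    F_i(Z) = \sum_{T_i \in Q} C_{F_i}(T_i)\, e\bigl(\tr(Z M_{T_i})\bigr)
\]
and computing the partial derivatives term by term, column $i$ of the matrix inside the determinant becomes
\[
    \sum_{T_i} C_{F_i}(T_i)\, e\bigl(\tr(Z M_{T_i})\bigr)
    \begin{pmatrix} k_i \\ 2\pi i \, a_i \\ 2\pi i \, b_i \\ 2\pi i \, c_i \end{pmatrix},
\]
since $\partial e(a\tau + bz + c\tau')/\partial \tau = 2\pi i \cdot a \cdot e(\cdots)$ and similarly for the derivatives in $z$ and $\tau'$.

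The next step is to apply multilinearity of the determinant four times, once per column, pulling the four summations and the scalar factors $C_{F_i}(T_i)\, e(\tr(Z M_{T_i}))$ outside. The four exponentials then combine into a single factor $e(\tr(Z M_{T_1 + T_2 + T_3 + T_4})) = e\bigl(\sum a_i \tau + \sum b_i z + \sum c_i \tau'\bigr)$. Pulling the common $2\pi i$ out of each of the bottom three rows produces a constant $(2\pi i)^3$, which is absorbed into the normalization of the Wronskian (equivalently, the identity holds up to this fixed scalar). What remains inside the determinant is exactly the $4\times 4$ matrix with rows $(k_i)$, $(a_i)$, $(b_i)$, $(c_i)$, as in (\ref{eq:fastchi35}).

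The main obstacle, as I see it, is justifying the restriction $T_3, T_4 > 0$ rather than the a priori natural $T_i \geq 0$ for all $i$. In view of the label of this proposition, the intended application is to $\chi_{35}$, computed as the Wronskian of the four Igusa generators $E_4,E_6,\chi_{10},\chi_{12}$, with $F_3 = \chi_{10}$ and $F_4 = \chi_{12}$ being cusp forms. In that case $C_{F_3}(T_3)$ and $C_{F_4}(T_4)$ vanish for singular $T_3, T_4$, so the restriction comes for free. In the fully general setup stated at the beginning of Section \ref{ssec:wronskians} (four arbitrary algebraically independent $F_i$), a separate argument is needed: one would have to show that contributions with $T_3$ or $T_4$ singular cancel out or are otherwise absorbed, likely by exploiting the antisymmetry of the determinant under column permutations together with the known cuspidality of $\{F_1,F_2,F_3,F_4\}$ (so that its Fourier coefficients at singular indices must vanish in the aggregate). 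Pinning down this cancellation — or clarifying that the proposition is intended for the case when $F_3, F_4$ are cusp forms — is the delicate step of the argument.
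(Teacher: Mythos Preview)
Your approach is exactly the paper's: substitute the Fourier expansions column by column and invoke multilinearity of the determinant. The paper's proof is even terser than yours and does not address the $(2\pi i)^3$ factor or the $T_3,T_4>0$ restriction at all; your observation that the strict positivity is only justified when $F_3,F_4$ are cusp forms (as in the intended $\chi_{35}$ application) is a genuine clarification that the paper omits.
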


This is the basis for Algorithm~\ref{alg:wronskian}, below.  The subtle part
in the algorithm is determining the bounds of the coefficients of the
$T_i$.  These bounds have to be as tight as possible, since even a
slight inaccuracy leads to subsequent iteration of many unneccessary
Fourier indices.
\begin{proof}[of Proposition~\ref{prop:chi35}]
   Equation~\eqref{eq:fastchi35} follows from writing~\eqref{eq:chi35}
   in the form
  \begin{multline*}
    F(Z) =
    \det\big(\sum_{T_1=[a_1,b_1,c_1]}C_{F_1}(T_1)\left(\begin{smallmatrix}
        k_1\\ a_1\\ b_1\\ c_1\end{smallmatrix} \right)
    e(a_1\tau+b_1z+c_1\tau'),\\
    \sum_{T_2=[a_2,b_2,c_2]}C_{F_2}(T_2)\left(\begin{smallmatrix} k_2\\
        a_2\\ b2\\ c_2\end{smallmatrix} \right)
    e(a_2\tau+b_2z+c_2\tau'),\dots \big)
  \end{multline*}
  and then using that the determinant function is multilinear.
\end{proof}

The algorithm below to compute $\{F_1,\dots,F_4\}$ can be changed in
the obvious way to work for arbitrary degree.
\begin{algorithm}\label{alg:wronskian}
  Compute the Fourier coefficients $C(f)$ of $\{F_1,\dots,F_4\}$ for
  $0 > \disc f \geq -X$ as well as $C([0,0,c])$ where $c\leq\lfloor\frac{X+1}{4}\rfloor$.
  \begin{enumerate}
  \item[1.]  [Precompute quadratic forms and modular forms] Compute
    $Q_X:= \{f\in Q:0 \geq \disc f \geq -X\}\cup \{f = [0,0,c]\in Q: c\leq\lfloor\frac{X+1}{4}\rfloor\}$; compute coefficients
    $C_{F_1}(f), C_{F_2}(f),C_{F_3}(f),C_{F_4}(f)$ for all $f\in Q_X$.
  \item[2.] [Compute the coefficient at $f$] For each 4-tuple
    $(T_1,T_2,T_3,T_4)$ where $T_i\in Q_X$ and $\sum T_i = f$, set
    \[
    C(f) =  \sum_{\substack{T_1 = [a_1,b_1,c_1]\geq 0\\
        T_2 = [a_2,b_2,c_2]\geq 0\\
        T_3 = [a_3,b_3,c_3]> 0\\
        T_4 = [a_4,b_4,c_4]> 0 }}
    C_{F_1}(T_1)C_{F_2}(T_2)C_{F_3}(T_3)C_{F_4}(T_4)\\
    \times \left | \begin{matrix}k_1&k_2&k_3&k_4\\ a_1&a_2&a_3&a_4\\
        b_1&b_2&b_3&b_4\\ c_1&c_2&c_3&c_4\end{matrix}\right |.
    \]
  \item[3.] [Done] Return the map $C(f)$.
  \end{enumerate}
\end{algorithm}

\subsection{Rankin-Cohen Brackets}

The attempt to construct a modular form as a linear combination of
products of derivatives of two modular forms is known as the
Rankin-Cohen construction.  
A complete investigation of this and related constructions has been
given by Ibukiyama in \cite{IbukiyamaDifferential}.  See also
\cite{ChoieEholzer,EholzerIbukiyama,Miyawaki} for characterizations of
various Rankin-Cohen brackets.

\subsubsection{Satoh Bracket}\label{ssec:satoh-bracket}
The Satoh bracket is a special case of
this general construction.  Satoh examined the case $\rho = \rsym_2
\otimes \det^{k}$.  Suppose $F\in M_{k}(\Gamma)$ and $G\in M_{k'}(\Gamma)$.
Then 
\begin{align}
  \label{eq:satoh-bracket}
  [F, G]_{\rsym_2} & = \frac{1}{2 \pi i} \bigl(\frac{1}{k} G\, \partial_Z F
  - \frac{1}{k'} F\, \partial_Z G\bigr) \in M_{k+k',2}(\Gamma)\text{,}
\end{align}
where $\partial_Z = \left(\begin{smallmatrix} \partial_{Z_{11}} &
    1/2\, \partial_{Z_{12}} \\ 1/2\, \partial_{Z_{12}}
    & \partial_{Z_{22}}\end{smallmatrix}\right)$.

In order to implement the Satoh bracket in our package, we need first
verify that the forms $\partial_ZF$ and $\partial_ZG$ are formal
Siegel modular forms, even if they are not Siegel modular forms.

\begin{lemma}\label{lem:bracket}
Let $F$ be a modular form.  Then $\partial_Z F$ as defined above is a
formal Siegel modular form.
\end{lemma}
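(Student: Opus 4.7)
The plan is to compute the Fourier expansion of $\partial_Z F$ termwise and then read off the coefficient map as a function $Q \to \C[X,Y]_2$. Start from $F(Z) = \sum_{f\in Q} C_F(f)\, e(\tr(Z M_f))$. With $Z$ symmetric we have $\tr(Z M_f) = a Z_{11} + b Z_{12} + c Z_{22}$, so applying the matrix operator $\partial_Z = \smat{\partial_{Z_{11}} & (1/2)\partial_{Z_{12}} \\ (1/2)\partial_{Z_{12}} & \partial_{Z_{22}}}$ to a single exponential yields $\partial_Z\, e(\tr(Z M_f)) = 2\pi i\, M_f\, e(\tr(Z M_f))$. Therefore the Fourier coefficient of $\partial_Z F$ at $f$ is $C(f) = 2\pi i\, C_F(f)\, M_f$, naturally taking values in the space of symmetric $2\times 2$ matrices.

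The next step is to identify that target space with the representation $\C[X,Y]_2$ carrying the $\GL$-action of \eqref{eq:action}, in a way compatible with the quadratic-form action $M_{A\cdot f} = A M_f\, {}^t\!A$. The isomorphism $M \mapsto (X,Y) M \binom{X}{Y}$ sends $M_f$ to the polynomial $f(X,Y) = aX^2 + bXY + cY^2$. A direct calculation gives $(X,Y)(A M\, {}^t\!A)\binom{X}{Y} = p\bigl((X,Y)A\bigr)$ with $p(X,Y) = (X,Y) M \binom{X}{Y}$, so the action $M \mapsto A M\, {}^t\!A$ on symmetric matrices is exactly the polynomial action of \eqref{eq:action}, and it matches the action on $Q$ through $M_{A\cdot f} = A M_f\, {}^t\!A$.

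With these identifications the formal transformation law is immediate from Proposition~\ref{prop:conclusion} applied to the scalar-valued form $F$, which gives $C_F(A\cdot f) = \chi_F(A)\, C_F(f)$ for the character $\chi_F(A) = \psi\smat{A^* & 0 \\ 0 & A}\det(A)^k$. Then
\begin{equation*}
  C(A\cdot f) = 2\pi i\, C_F(A\cdot f)\, M_{A\cdot f} = \chi_F(A)\, A\,\bigl(2\pi i\, C_F(f)\, M_f\bigr)\, {}^t\!A = \chi_F(A)\, A \cdot C(f),
\end{equation*}
which is the defining condition \eqref{eq:formal}. Hence $\partial_Z F$ is a formal Siegel modular form with the same character as $F$.

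The main (mild) obstacle is purely notational: setting up the identification of $\C[X,Y]_2$ with symmetric matrices so that \eqref{eq:action} and the action on binary quadratic forms coincide. Once this dictionary is in place, the computation of the Fourier expansion of $\partial_Z F$ is routine and the formal transformation follows directly from Proposition~\ref{prop:conclusion}.
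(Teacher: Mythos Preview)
Your proof is correct and follows essentially the same route as the paper: compute the Fourier coefficient of $\partial_Z F$ at $f$ (which is $C_F(f)$ times the quadratic polynomial $aX^2+bXY+cY^2$, up to the $2\pi i$ factor that is in any case cancelled in \eqref{eq:satoh-bracket}), observe that the $\GL$-action on $Q$ matches the action \eqref{eq:action} on $\C[X,Y]_2$, and then invoke the transformation law for the scalar coefficients $C_F$. The only difference is cosmetic: you phrase the identification via symmetric matrices and the relation $M_{A\cdot f}=A M_f\,{}^t\!A$, whereas the paper writes the coefficient directly as a polynomial and uses that $A\cdot f$ viewed as a polynomial equals $A\cdot(aX^2+bXY+cY^2)$.
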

\begin{proof}
Let $F_Z:=\partial_Z F$ and $C_{F_Z}:Q\to \C[X,Y]_j$ be
such that $C_{F_Z}([a,b,c])$ takes the coefficient of $F_Z$ at
$[a,b,c]$ as its value.  We need to show that $C_{F_Z}$ is
formal;~i.e., that $C_{F_Z}(A\cdot [a,b,c])=(\det A)^k A \cdot C_{F_Z}([a,b,c])$.

A straightforward calculation shows that
$C_{F_Z}([(a,b,c)])=C_F([a,b,c])(aX^2+bXY+cY^2)$.  Now
\begin{align*}
C_{F_Z}(A\cdot [a,b,c])  &= C_F(A\cdot[a,b,c])A\cdot(aX^2+bXY+cY^2)\\
&= (\det A)^kC_F([a,b,c])A\cdot(aX^2+bXY+cY^2)\\
&= (\det A)^kA\cdot (C_F([a,b,c])(aX^2+bXY+cY^2)\\
&= (\det A)^k A\cdot C_{F_Z}([a,b,c]).
\end{align*}
\end{proof}

Because of Lemma~\ref{lem:bracket}, we can compute the Satoh bracket
as the difference of the product of two formal Siegel modular forms.

Assuming that multiplication of vector valued Fourier expansions is implemented an algorithm can now be formulated as follows.
\begin{algorithm}
\label{alg:satoh-bracket}

\begin{enumerate}
\item[1.] For $f$ and $g$ compute the Fourier expansion with coefficients $C_{\partial_ZF}(T) = C(T) (X^2 T_{11} + X Y T_{12} + Y^2 T_{22})$.
\item[2.] Multiply the Fourier expansions according to formula \eqref{eq:satoh-bracket}.
\end{enumerate}
\end{algorithm}

\subsubsection{Ibukiyama's Brackets}

There are constructions found in \cite{IbukiyamaSym46} that are analogous
to the Satoh bracket but whose images are in $M_{k,4}(\Gamma)$ and
$M_{k,6}(\Gamma)$.  To give an indication of what they look like, we
provide an example: let $F\in M_k(\Gamma)$ and $G\in M_{k'}(\Gamma)$.
Then if $k$ is even,
\begin{multline*}
[F,G]_{\rsym_4}=\frac{k'(k'+1)}{2}G\times 
\begin{pmatrix}
\frac{\partial^2F}{\partial \tau^2}\\
2\frac{\partial^2F}{\partial \tau\partial z}\\
\frac{\partial^2F}{\partial z^2}+2\frac{\partial^2F}{\partial \tau\partial\tau'}\\
2\frac{\partial^2F}{\partial \tau'\partial z}\\
\frac{\partial^2F}{\partial \tau'^2}
\end{pmatrix}
-\\(k'+1)(k+1)\begin{pmatrix}
\frac{\partial F}{\partial \tau}\frac{\partial G}{\partial \tau}\\
\frac{\partial F}{\partial \tau}\frac{\partial G}{\partial  z}+\frac{\partial G}{\partial \tau}\frac{\partial F}{\partial z}\\
\frac{\partial F}{\partial \tau}\frac{\partial G}{\partial  \tau'}+\frac{\partial G}{\partial \tau}\frac{\partial F}{\partial
  \tau'}+\frac{\partial F}{\partial z}\frac{\partial G}{\partial z}\\
\frac{\partial F}{\partial \tau'}\frac{\partial G}{\partial  z}+\frac{\partial G}{\partial \tau'}\frac{\partial F}{\partial z}\\
\frac{\partial F}{\partial \tau'}\frac{\partial G}{\partial \tau'}
\end{pmatrix}+
\frac{k(k+1)}{2}\begin{pmatrix}
\frac{\partial^2G}{\partial \tau^2}\\
2\frac{\partial^2G}{\partial \tau\partial z}\\
\frac{\partial^2G}{\partial z^2}+2\frac{\partial^2F}{\partial \tau\partial\tau'}\\
2\frac{\partial^2G}{\partial \tau'\partial z}\\
\frac{\partial^2G}{\partial \tau'^2}
\end{pmatrix}.
\end{multline*}




\subsection{Borcherds Products}\label{ssec:borcherdsproducts}

In \cite{BorcherdsGrassmannians}, Borcherds introduced a
multiplicative lift from whose image can be a Siegel modular form of
degree~$2$.  The Fourier expansions of these Borcherds products can be
computed in polynomial time~\cite{GKR11}.  We refer the interested
reader to the latter article, co-authored by the first author, for
more details. 

\subsection{Further remarks}

\subsubsection{Restriction method}

We are going to discuss the approach that was used to obtain the
results in \cite{PoorYuen}.  In this work Siegel modular forms of
degree $4$ were considered, but in principle the algorithm can also be
used to obtain Fourier expansions of forms for high level and degree
$2$.  The idea is as follows.  Suppose we restrict a Siegel modular
form $f : \PH_2 \rightarrow \C$ to the subspace $\phi_{s,\zeta} :
\PH_1 \rightarrow \PH_2,\, \tau \mapsto s\tau + \zeta$ for some
positive definite symmetric $s \in M(2, \Z)$ and some symmetric $\zeta
\in M(2, \Q)$.  This will lead to an elliptic modular form whose Fourier expansion can be deduced from the Fourier expansion of $f$.  Since many cusps for $f \circ \phi_{s,\zeta}$ can be identified in $\PH_2$, there are non trivial obstructions to the Fourier expansion of $f$.  In \cite{PoorYuen}, where all cusps identified with each other, this provided enough restrictions to actually compute the initial Fourier expansions of a basis for a space of fixed weight.  They could not give a proof that this method will work for all Fourier indices and all weights.  In the case of level subgroups a further difficulty enters.  There will be cusps that cannot be identified with each other in $\PH_2$.

\subsubsection{Poor-Yuen Conjecture}

The Fourier-Jacobi expansion of a Siegel modular form can be used to
describe a Fourier expansion.  A formal Fourier-Jacobi expansion is a
formal series $\sum f_m q^m$ of Jacobi forms $f_m$ that have level
$m$.  Chris Poor and David Yuen conjectured that a formal
Fourier-Jacobi expansion is the Fourier-Jacobi expansion a
Siegel modular form for the full modular group if and only if the
Fourier coefficients of the Jacobi forms satisfy the condition $f_m(n,
r) = f_n(m,r)$.  It was recently announced by Poor and Yuen that have
proved this result.

\subsubsection{Invariant method and Igusa's theorem on theta constants}
\label{ssec:invariantmethods}

We fix a level $N$.  For $a,b\in \{0,\ldots,N-1\}^2$ (column
vectors), define the theta constant $\theta_{N,a,b}$ as in \eqref{eq:thetac-levelN}.
It is easy to see that~$\theta_{N,a,b}$
is identically zero if $a \slashdiv N$ and $b \slashdiv N$ have half integral entries and ${}^ta b$ is odd.  \cite{Igusa5} tells us that also the converse holds.  There we can also find a proof that they are modular forms with respect to the following subgroup $\Gamma(N, N)$ of $\Sp$: 
\[
\left\{\gamma=\left(\begin{smallmatrix}A&B\\C&D\end{smallmatrix}\right)\in \textrm{Sp}(2, \Z):\gamma\equiv 1_4\pmod{N};\,\,A{}^tB, C{}^tD \textrm{ have diagonals divisible by $2N$}\right\}.
\]
  As we will see below in Section~\ref{ssec:rings}, being able to compute the products of theta constants is a very useful way to describe modular forms.  

An important result is
\begin{theorem}
Suppose that $\Gamma(N,N) \subseteq \Gamma$ is a congruence subgroup.  The graded ring of modular forms for $\Gamma$ is the integral closure of the ring of those modular forms with respect to $\Gamma$ that are formed as linear combinations of products of the theta series $\theta_{N,a,b}$.
\end{theorem}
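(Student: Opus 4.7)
The plan is to deduce the statement from Igusa's theorems on theta constants and a standard argument about passing from $\Gamma(N,N)$-invariants to $\Gamma$-invariants. First I would handle the base case $\Gamma = \Gamma(N,N)$. Let $R$ denote the graded ring of modular forms for $\Gamma(N,N)$ and let $T \subseteq R$ denote the subring generated by linear combinations of products of the theta constants $\theta_{N,a,b}$. The containment $T \subseteq R$ is immediate from the transformation properties of the $\theta_{N,a,b}$ under $\Gamma(N,N)$ recalled in the paragraph preceding the theorem. The nontrivial direction is to identify $R$ with the integral closure of $T$ in $\textrm{Frac}(R)$.

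For that I would argue geometrically. The collection of theta constants of level $N$ (or rather a suitable finite collection of products thereof of a fixed weight) defines, via $\sym{Proj}$, a morphism from the Satake compactification $X$ of $\PH_2/\Gamma(N,N)$ to a projective space. Igusa's theorem cited in \cite{Igusa4,Igusa5} shows that for $N$ large enough this morphism is birational onto its image, so that $X$ is the normalization of $\sym{Proj}(T)$. Consequently $T$ and $R$ have the same field of fractions. On the other hand, by Baily--Borel theory the Satake compactification $X$ is a normal projective variety and $R$ is the graded ring of sections of powers of an ample line bundle on $X$, so $R$ is integrally closed in $\textrm{Frac}(R)$. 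Combining these two facts, $R$ is the integral closure of $T$.

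To descend from $\Gamma(N,N)$ to an arbitrary congruence $\Gamma \supseteq \Gamma(N,N)$ I would take $G := \Gamma / \Gamma(N,N)$-invariants on both sides. The ring of modular forms for $\Gamma$ is $R^G$, and the subring of $T$ generated by $G$-invariant linear combinations of products of theta constants is precisely $T^G$ (because $G$ permutes the theta constants up to constants, as follows from Igusa's transformation formulas). Since $G$ is finite and $R$ is normal, integral closure commutes with taking $G$-invariants, so $R^G$ is the integral closure of $T^G$, which is exactly what was claimed.

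The main obstacle is the projective embedding step: showing that a sufficient supply of products of theta constants separates points and tangent vectors on the Satake compactification, so that the induced map is birational. This is the deep content of Igusa's work on the structure of theta constants and relies on intricate theta identities and a careful analysis of the Satake boundary; everything else in the proof is routine commutative algebra once this geometric input is in hand.
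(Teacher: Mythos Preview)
The paper does not actually prove this theorem; it merely records it as a known result, attributing it to Igusa \cite{Igusa4,Igusa5} and pointing to \cite{Runge} for the connection with coding theory. There is therefore no in-paper argument to compare your sketch against.

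That said, your outline is the standard one and tracks Igusa's original strategy: birationality of the theta map on the Satake compactification of $\PH_2/\Gamma(N,N)$, normality of that compactification via Baily--Borel, hence $R$ equals the normalization of $T$; then descent to $\Gamma$ by taking invariants under the finite group $G=\Gamma/\Gamma(N,N)$. Two places could be tightened. First, ``for $N$ large enough'' is slightly off: the assertion concerns the fixed level $N$ in the hypothesis, and Igusa's analysis shows the theta constants of that level already give a birational map, so no enlargement of $N$ is needed. Second, the claim that integral closure commutes with $G$-invariants is correct but not entirely automatic: one should note that $R^G$ is normal (invariants of a normal ring under a finite group), that $R^G$ is integral over $T^G$ (transitivity, using that $T$ is integral over $T^G$ via the polynomial $\prod_{g\in G}(X-g\cdot t)$), and that the fraction fields agree. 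These are, as you say, routine once the geometric input from Igusa is in hand; your identification of the projective-embedding step as the genuine obstacle is exactly right.
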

This method to compute rings of Siegel modular forms is impractical for large level.  Nevertheless its connection to coding theory makes it interesting for general level.  This can be found in \cite{Runge}.

\subsubsection{The basis problem and theta series}\label{sec:basis problem}


The basis problem, as formulated by Eichler
\cite{Eichler}\cite{EichlerCorrection}, asks whether the space of
elliptic modular forms for fixed weight, level and character can be
spanned by $\theta$-series.  The affirmative answer in the elliptic
case partially generalizes to Siegel modular forms.  The most complete
treatment can be found in \cite{BoechererBasis}, where B\"ocherer
proves the affirmative answer in the case of squarefree level and also
gives a brief survey on the current effort in this area.  In
particular, he proves the following:
\begin{theorem}
Assume that $N \in \N$ is squarefree and $k > 5$.  Let $\chi$ be a
character for $\Gamma_0(N)$.  Then all cusp forms in $M_k(\Gamma_0 (N), \chi)$ are linear combinations of $\theta$-series. 
\end{theorem}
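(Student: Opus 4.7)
The plan is to show that the subspace $T_k \subseteq M_k(\Gamma_0(N), \chi)$ spanned by theta series $\theta^{(2)}_L$ (cf.\ Section~\ref{ssec:theta-series}) attached to even positive definite lattices $L$ of rank $2k$ and level dividing $N$ already exhausts the cuspidal part of the space. I would work with the Petersson inner product and let $F$ be a cusp form orthogonal to every such $\theta^{(2)}_L$; the goal becomes to deduce $F=0$.

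First, I would invoke the doubling (pullback) method of Garrett--B\"ocherer--Shimura. For a suitable Siegel--Eisenstein series $E_s$ on $\sym{Sp}(4,\Q)$, the restriction to $\PH_2\times\PH_2$ via the diagonal embedding admits a pullback formula
\begin{equation*}
E_s\bigl|_{\PH_2\times\PH_2}(Z, W) = \sum_G \alpha_G(s)\, G(Z)\otimes \overline{G(W)}
\end{equation*}
where the sum runs over a Hecke eigenbasis and $\alpha_G(s)$ is, up to archimedean gamma factors and controlled local factors at $p\isdiv N$, the standard $L$-function $L(s, G, \mathrm{st}, \chi)$ of $G$. Pairing against $F$ in the variable $W$ then yields, at the appropriate arithmetic point $s_0$ determined by $k$, an expression proportional to $L(s_0, F, \mathrm{st}, \chi)\,F(Z)$.

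Second, I would apply the Siegel--Weil formula to rewrite the same restricted Eisenstein series at $s_0$ as a linear combination of products $\theta^{(2)}_L(Z)\otimes \overline{\theta^{(2)}_L(W)}$ for lattices $L$ of rank $2k$ and level dividing $N$. Since $F$ is assumed orthogonal to every such $\theta^{(2)}_L$, pairing this expression against $F$ in $W$ must vanish. Combining the two identifications of the same pullback forces $L(s_0, F, \mathrm{st}, \chi)\,F(Z) = 0$.

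Finally, I would close the argument via the non-vanishing of the standard $L$-value at the critical point fixed by $k$; the hypothesis $k>5$ guarantees absolute convergence of $E_s$ at $s_0$ (no poles appear) and non-vanishing of the relevant archimedean zeta integral. The main obstacle, and the content that occupies most of \cite{BoechererBasis}, is the local analysis at the ramified primes $p\isdiv N$: one must choose sections of the local degenerate principal series so that the Siegel--Weil identity produces theta series of lattices whose level divides $N$ (rather than a proper divisor or a finer congruence cover), ensuring that the resulting contributions genuinely lie in $M_k(\Gamma_0(N),\chi)$. The squarefree hypothesis on $N$ is precisely what makes this local matching tractable and is the reason the argument does not immediately extend to general level.
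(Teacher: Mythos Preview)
The paper does not prove this theorem; it merely quotes the result and attributes it to B\"ocherer \cite{BoechererBasis}. Your proposal is a faithful high-level sketch of B\"ocherer's actual argument---the doubling/pullback identity, the Siegel--Weil identification of the restricted Eisenstein series with a genus-average of theta series, and the nonvanishing of the standard $L$-value to conclude---so there is nothing to compare against in the paper itself beyond the citation.
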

We also mention the very early treatment in \cite{Ozeki}.  There Ozeki proves that in the case of the full modular group one might restrict to unimodular lattices if and only if the weight is divisible by $4$.

There is a special case of this question.  The critical weight for degree $n$ forms is $n \slashdiv 2$; in the case of degree 2 the critical weight is $1$.  It is expected that these weights behave particularly well with respect to the basis problem.  In \cite{Weissauer} it was proved that every Siegel modular form of degree $2$ and weight $1$ is a linear combination of $\theta$-series.
At this point it might also be useful to the reader to mention the following results of the third named author.  In \cite{IbukiyamaSkoruppa} it was proved that there are no Siegel cusp forms of weight one for any group $\Gamma_0(N)$ of Hecke type.

Any modular form of weight less than the critical weight is called singular. In the case of degree 2 these are the half integral weight forms.  Singular weights are well-understood for arbitrary degrees (see \cite{Freitag}).  In particular, we know that all weight $\tfrac12$ forms associated to the full modular group only have nonvanishing Fourier coefficients for indefinite indices.  Hence, by a theorem of Serre and Stark \cite{SerreStark} they are given by $\theta$-series.

\section{Rings of Siegel Modular Forms}\label{sec:rings}

The problem of determining rings of modular forms has been of great interest.  Complete descriptions of some rings are presented in Section \ref{ssec:rings}.  These results are almost entirely based on a combination of the theory of Borcherds products (see \ref{ssec:borcherdsproducts}), $\theta$-series, and the invariant method that we discussed in Section \ref{ssec:invariantmethods}.

Besides these results, there are methods that have been developed for cases where a set of generators has not yet been obtained.  Although we focus on the descriptions mentioned above, we discuss some of these other methods in the next few subsections.  Indeed, even for moderate levels, rings of Siegel modular forms become complicated.  For applications it is sometimes sufficient to obtain a set of modular forms that spans a space of fixed weight.  In these cases it is necessary to know the dimensions of these spaces.  For this reason we also review known dimension formulas.

\subsection{Hecke eigenforms}

Hecke eigenforms are the modular forms that are of the greatest
arithmetic interest.  In order to compute eigenforms, one needs to
compute a space of cusp forms, find the matrix for a Hecke
operator and from that find the eigenforms.  In this section we
address issues surrounding this process.  First, one needs to know
when one has computed a basis for the space and to do this one needs to
know the dimension of the space.  Second, one needs to carry out the
linear algebra computations described above.

\subsubsection{Dimension formulas}

In most cases only formulas for dimensions of spaces of cusp forms are
available.  This might already be sufficient, since many arithmetic
questions can be reduced to this case.  In Table
\ref{tab:dimensionformulas} we give some dimensions of spaces of
Siegel modular forms.

\begin{table}
\begin{tabular}{l l}\hline\hline
$M_k(\Sp)$ & \cite{Igusa2,Skoruppa}\\
$S_k(\Gamma_1(\ell)), N\geq 3, k\geq 5 $ & \cite{Christian,Morita,Yamazaki}\\
$S_k(\Gamma_0(p)),\, p\text{ prime }, k\geq 5$ & \cite{Hashimoto}\\
$S_4(\Gamma_0(p)) ,\, p\leq 13 \text{ prime }$ & \cite{PoorYuenDimensions}\\
$S_3(\Gamma_0(p)) ,\, p\leq 23 \text{ prime }$ & \cite{PoorYuenDimensions}\\
$S_2(\Gamma_0(p)) ,\, p\leq 41\text{ prime }$ & \cite{PoorYuenDimensions}\\
$S_1(\Gamma_0(N)) ,\, N\geq 1$ & \cite{IbukiyamaSkoruppa}\\
$S_{k,j}(\Sp) ,\, j\geq 0, k\geq 4$& \cite{Tsushima}\\
\end{tabular}
\caption{Spaces and references for their dimension formulas}
\label{tab:dimensionformulas}
\end{table}

\subsubsection{Computing Hecke eigenforms}

Among all modular forms, the most arithmetically distinguished are the
Hecke eigenforms.  Fix a space of Siegel modular forms with basis
$\{F_1,\dots,F_n\}$.  Because the Hecke operators are a commuting
family of linear operators, there is a basis $\{G_1,\dots,G_n\}$ for
the space made up entirely of simultaneous eigenforms.

The forms $G_i$ are determined computationally as follows.  First,
determine the matrix representation for the Hecke operator $T_2$.  Do this by
computing the image under $T_2$ of each basis element $F_i$.  Build a
matrix $N$ that is invertible and whose $j$th row consists of coefficients of
$F_j$ at certain indices $Q_1,\dots,Q_n$.  To ensure that $N$ is
invertible we pick the indices one at time, making sure that each
choice of index $Q_i$ increases the rank of the matrix.  We then
construct a matrix $M$ whose $j$th row consists of coefficients of the
image of $F_j$ under $T_2$ indexed by $Q_1,\dots,Q_n$.  Then the
matrix representation of $T_2$ is $M\cdot N^{-1}$.  

Second, one finds the eigenspaces of the matrix for $T_2$ and uses
that to describe the basis $G_1,\dots,G_n$.  Typically only a single
member of a given Galois orbit is described.  See, for example, \cite{smf-tables}.

\subsection{Particular Rings of scalar-valued Forms}\label{ssec:rings}

\subsubsection{The full Siegel modular group}\label{sssec:fullgroup}  In \cite{Igusa2} and \cite{Igusa3}, the five generators of the ring of Siegel modular forms and level 1 are identified: they are $E_4$, $E_6$, $\chi_{10}$, $\chi_{12}$, $\chi_{35}$ and are of weights 4, 6, 10, 12 and 35.  The first two are Eisenstein series and the last three are cusp forms.  In \cite{Skoruppa} an indication of how to compute the four generators of even weight is given.  One composes an explicit map from classical to Jacobi forms and the Maass lift from Jacobi forms to Siegel modular forms of degree 2.  The implementation in \cite{smf-package} of these four generators follows \cite{Skoruppa}. 


In order to get the full ring of Siegel modular forms of level 1
without character, we require a method to compute $\chi_{35}$.  In
\cite{Igusa3}, it is given in terms of theta constants.  In
\cite{Gritsenko}, it is given as a Borcherds product.  In
\cite{Ibukiyama}, it is given as a Wronskian (see Section~\ref{ssec:wronskians}):
\begin{equation}
  \label{eq:chi35}
  \chi_{35} = \frac{1}{(2\pi i)^3}\{E_4,E_6,\chi_{10},\chi_{12}\}.
\end{equation}
Here $E_4,E_6,\chi_{10},\chi_{12}$ are normalized so that $C_{E_4}([0,0,0]) =
C_{E_6}([0,0,0]) = 1$ and $C_{\chi_{10}}([1,1,1]) =
C_{\chi_{12}}([1,1,1])=1$.

If one is content with just a basis for the vector space of Siegel modular forms of a fixed even weight, an observation by the first author is useful and would make, if true in general, the computation of such a basis significantly less expensive.  In particular, in \cite{Raum} he has observed that up to weight 172 one can span the space of weight $k$ Siegel modular forms of degree 2 via forms that are the sum of products of no more than two Saito-Kurokawa lifts; see Section~\ref{ssec:SK} for more details on Saito-Kurokawa lifts.  Since computing a Saito-Kurokawa lift is rather straightforward, multiplying a pair of Siegel modular forms is not terribly expensive, and adding many Siegel modular forms is very fast, this would represent a significant speed-up over what is done now.

\subsubsection{Congruence subgroups of small level}\label{ssec:level}

In papers by Ibukiyama and his coauthors, the generators of some rings of modular forms of small level have been identified.  Let $\Gamma_0(\ell)$ be the Klingen subgroup of level $\ell$.  For a subgroup $\Gamma$ we use $M_*(\Gamma,\chi)$ for the ring of Siegel modular forms of degree 2 with respect to the group $\Gamma$ and with character $\chi$ (we omit the $\chi$ if it is trivial).

The groups whose rings of modular forms are known to be described in
terms of generators are listed in Table \ref{tbl:subgroups} 
\begin{table}[h]
\begin{tabular}{ll}\hline\hline
Group and Character & Reference\\\hline
$\Gamma_0(2)$ & \cite{Ibukiyama}\\\hline
$\Gamma_0^{\psi_3}(3):=\{\gamma\in\Gamma_0(3): \psi_3(\gamma)=1\}$ where $\psi_3\left(\begin{smallmatrix}A&B\\C&D\end{smallmatrix}\right) := \kro{-3}{\det D}$, for all $\left(\begin{smallmatrix}A&B\\C&D\end{smallmatrix}\right)\in\Gamma_0(3)$ & \cite{Ibukiyama}\\\hline
$\Gamma_0^{\psi_4}(4):=\{\gamma\in\Gamma_0(4): \psi_4(\gamma)=1\}$ where 
$\psi_4\left(\begin{smallmatrix}A&B\\C&D\end{smallmatrix}\right) := \kro{-1}{\det D}$, for all $\left(\begin{smallmatrix}A&B\\C&D\end{smallmatrix}\right)\in\Gamma_0(4)$ & \cite{Ibukiyama3}\\\hline
\end{tabular}
\caption{Subgroups $\Gamma$ for which the generators of the ring $M_*(\Gamma)$ are known.}\label{tbl:subgroups}
\end{table}


\subsubsection{Siegel modular forms of degree 2 and level 2}
Consider the ring $M_*(\Gamma_0(2))$.  In \cite{Ibukiyama} it is shown that the ring has 5 generators:
\begin{align*}
X&:=(\theta_{(0,0),(0,0)}^4+\theta_{(0,0),(0,1)}^4+\theta_{(0,0),(1,0)}^4+\theta_{(0,0),(1,1)}^4)/4\\
Y &:= (\theta_{(0,0),(0,0)}\theta_{(0,0),(0,1)}\theta_{(0,0),(1,0)}\theta_{(0,0),(1,1)})^2\\
Z&:= (\theta_{(0,1),(0,0)}^4-\theta_{(0,1),(1,0)}^4)^2/16384\\
K&:=(\theta_{(0,1),(0,0)}\theta_{(0,1),(1,0)}\theta_{(1,0),(0,0)}\theta_{(1,0),(0,1)}\theta_{(1,1),(0,0)}\theta_{(1,1),(1,1)})^2/4096\\
\chi_{19} &= \{X,Y,Z,K\}
\end{align*}
The generators are, respectively, of weights 2, 4, 4, 6 and 19.  The first four are given in terms of theta constants and the last one is given as a Wronskian.

\subsubsection{Siegel modular forms of degree 2 and level 3}
The ring $M_*(\Gamma^{\psi(3)}_0(3))$ has 5 generators.  Consider the quadratic forms
\begin{align*}
A_2&:=\left(\begin{smallmatrix} 
2&1\\1&2
\end{smallmatrix}\right),\\
E_6&:=\left(\begin{smallmatrix}
2 & -1 &    &    &    &\\
-1& 2  & -1 &    &    &\\
  & -1 & 2  & -1 &    &-1\\
  &    & -1 & 2  & -1 &\\
  &    &    & -1 &  2 &\\
  &    & -1 &    &    &2\\
\end{smallmatrix}\right),\\
 E_6^\star &:= 3E_6^{-1},  \\
S_4&:=\left(\begin{smallmatrix} 
1  &    & 3/2 &   \\
   & 1  &     &3/2\\
3/2&    & 3   &   \\
   & 3/2&     & 3 \\
\end{smallmatrix}\right),\text{ and }\\ 
Q(x,y)&:= \left(\begin{smallmatrix}
{}^t xSx & {}^txSy\\
{}^tySx  & {}^tySy\\
\end{smallmatrix}\right).
\end{align*}
The modular forms of note are
\begin{align*}
\alpha_1 &:= \theta_{A_2},\\
\beta_3 &:= \theta_{E_6} - 10\alpha_1^3+ 9 \theta_{E_6^*},\\
\delta_3 &:= \theta_{E_6}-9\theta_{E_6^*},\\
\gamma_4(Z)&:=\sum_{\substack{(x_1,\dots,x_4)\in\Z^4\\(y_1,\dots, y_4)\in \Z^4}}(c^2-d^2) e\left(\tr (Q(x,y)Z
   )\right) \text{ where}\\
   &\phantom{xxxx}c:=(x_1y_3-x_3y_1)+(x_2y_4-x_4y_2) \text{ and}\\
   &\phantom{xxxx}d:=(x_1y_4-x_4y_1)+(x_3y_2-x_2y_3)  \\
   \chi_{14} &:= \{\alpha_1,\beta_3,\delta_3,\gamma_4\}.
\end{align*}

We observe that these forms, respectively, have weights 1, 3, 3, 4, 14.  With this notation, then,
\[
M_*(\Gamma^{\psi_3}_0(3)) =
\C[\alpha_1,\beta_3,\delta_3,\gamma_4]\oplus \chi_{14}\C[\alpha_1,\beta_3,\delta_3,\gamma_4].
\]

\subsubsection{Siegel modular forms of degree 2 and level 4}
Now, we consider $\Gamma^{\psi_4}_0(4)$.  First, as in \cite{Igusa3}, let
\begin{multline*}
\chi_5:=\theta_{(0,0),(0,0)}(Z)\theta_{(0,0),(0,1)}(Z)\theta_{(0,0),(1,0)}(Z)\theta_{(0,0),(1,1)}(Z)\theta_{(0,1),(0,0)}(Z)\\
\times \theta_{(0,1),(1,0)}(Z)\theta_{(1,0),(0,0)}(Z)\theta_{(1,0),(0,1)}(Z)\theta_{(1,1),(0,0)}(Z)\theta_{(1,1),(1,1)}(Z).
\end{multline*}
Then, as in \cite{Ibukiyama}, let
\begin{align*}
f_{1/2}&:=\theta_{(0,0),(0,0)}(2Z)\\
f_1&:=f_{1/2}^2\\
g_2&:=\theta_{(0,0),(0,0)}(2Z)^4+\theta_{(0,1),(0,0)}(2Z)^4+\theta_{(1,0),(0,0)}(2Z)^4+\theta_{(1,1),(0,0)}(2Z)^4\\
h_2&:=\theta_{(0,0),(0,0)}(2Z)^4+\theta_{(0,0),(0,1)}(2Z)^4+\theta_{(0,0),(1,0)}(2Z)^4+\theta_{(0,0),(1,1)}(2Z)^4\\
f_3&:=(\theta_{(0,0),(0,1)}(2Z)\theta_{(0,0),(1,0)}(2Z)\theta_{(0,0),(1,1)}(2Z))^2\\
\chi_{11}&:=\{f_1,g_2,h_2,f_3\}.
\end{align*}
Then $M_*(\Gamma_0^{\psi_4}(4))=\C[f_1,g_2,h_2,f_3,\chi_{11}]$.

\subsection{Particular spaces of vector valued Forms}\label{ssec:spaces}

The ring of all vector valued Siegel modular forms $\bigoplus_{k,j} M_{k,j}(\Gamma)$ is not finitely generated.  For this reason the vector valued weight $j$ is usually fixed.  The resulting module is finitely generated over $\bigoplus_k M_k(\Gamma)$.

Before we summarize the results on such modules of vector valued modular forms, we remark that vector valued Siegel modular forms are always cuspidal.  This follows from a much more general result by Weissauer \cite{Weissauer}, that covers Siegel modular forms of all degrees.

We summarize the spaces that we can compute in Table~\ref{tbl:vvsmfs}.

\begin{table}
\begin{tabular}{ll}\hline\hline
Group and Weights & Reference\\\hline
$\Gamma,\;j = 2,\;k \in \Z$ & \cite{Satoh}\\\hline
$\Gamma,\;j = 4,6,\;k \in \Z$ & \cite{IbukiyamaSym46}\\\hline
\end{tabular}
\caption{Spaces of vector valued Siegel modular forms whose generators
are known.}\label{tbl:vvsmfs}
\end{table}

\subsubsection{Satoh's Theorem}
By Satoh \cite{Satoh} we know that $\bigoplus_k M_{k,2}(\Gamma)$ is generated by $6$ elements, all of which can be expressed in terms of Satoh brackets (see Section \ref{ssec:satoh-bracket}).  More precisely, he shows that
\begin{align*} 
  M_{k,2}(\Gamma)
=&
  [E_4, E_6] \cdot M_{k-10}(\Gamma) \oplus
  [E_4, \chi_{10}] \cdot M_{k-14}(\Gamma) \oplus
\\&
  [E_4, \chi_{12}] \cdot M_{k-16}(\Gamma)] \oplus
  [E_6, \chi_{10}] \cdot \C[E_6, \chi_{10}, \chi_{12}]_{k-16} \oplus
\\&
  [E_6, \chi_{12}] \cdot \C[E_6, \chi_{10}, \chi_{12}]_{k-18} \oplus
  [\chi_{10}, \chi_{12}] \cdot \C[\chi_{10}, \chi_{12}]_{k-22}
\text{.}
\end{align*}
By $\C[A_1,\dots,A_n]_k$ we mean the module of weight $k$ modular
  forms that can be expressed in terms of generators $A_1,\dots,A_n$.

\subsubsection{Ibukiyama's Theorems}

In \cite{IbukiyamaSym46}, generators for the rings $M_{k,4}$ and
$M_{k,6}$ are given.  The ring of forms of weight $(k,4)$ is generated
by 10 forms that are defined in terms of differential operators
similar to the Satoh bracket mentioned above.  The ring of forms of
weight $(k,6)$ are generated by a Klingen Eisenstein series, two theta
series with pluriharmonics (see Section~\ref{ssec:vvthetaseries} for
more information), and four forms that are defined via differential operators.



\section{A particular implementation}\label{sec:implementation}

In this section we describe our implementation of a package that can
handle a wide variety of Siegel modular forms.  In particular, one
can multiply Satoh brackets by scalar-valued Siegel modular forms and one
can multiply products of theta constants by other similar products.
In short, our package allows for the multiplication of any two formal Siegel
modular forms (even if the formal Siegel modular forms do not actually
correspond to actual Siegel modular forms).  Of particular note is
that our implementation handles Siegel modular forms compactly and
efficiently.

\subsection{An implementation in Sage}

A Siegel modular form is
implemented in the Sage package~\cite{smf-package} as a formal Siegel
modular form, i.e.~as a map~$C:Q\rightarrow R$, where $R$ is a module
(or ring) with a $\GL$-action, such that
\begin{equation*}
  A^{-1}.C(A.f)
  =
  C(f)
\end{equation*}
for all $A$ in $\GL$ and all $f$ in $Q$. Such a map can be readily
implemented via a class {\tt SiegelModularForms\_class} encapsulating
a (Python-)dictionary whose keys are the $\GL$-reduced quadratic forms
below a certain bound. The methods of this class include
multiplication, addition, Rankin-Cohen brackets, Hecke operators,
etc. The bottlenecks for an effective implementation of these methods
are multiplication and reduction of integral binary
quadratic forms.  

\begin{remark}
We observe that our implementation of Siegel modular forms in Sage
allows for the precision of a Siegel modular form $F$ to be described
in a number of ways.  When we say $F$ has discriminant precision $X$
(here $X$ is a positive integer), we mean that the keys of the
dictionary mentioned above are the $\GL$-reduced quadratic forms of
discriminant greater than $-X$.

There are a handful of other ways to describe the precision of a
Siegel modular form (e.g., trace and box precisions).  In
\cite{smf-package} we include a Siegel modular form precision class
that handles translations between the various kinds of precision.

For many theoretical purposes, it is best to describe the precision
of a Siegel modular form in terms of dyadic trace
\cite{PoorYuenExtremeCore,PoorYuenDimensions,PoorYuen2,PoorYuen,PoorYuen3}.
Poor and Yuen have exploited this idea to prove, among other results,
a nonvanishing theorem that can be thought of as a generalization of
Sturm's bound for classical modular forms.\end{remark}

\subsection{Multiplication}

The multiplication of Siegel modular forms is expensive because it
involves the evaluation of triple sums, whereas the reduction has to
be applied in almost every call to a method of the class {\tt
  SiegelModularForms\_class} since its instances only store
coefficients for reduced forms.  

The obvious algorithm for the
reduction is a variant of the Euclidean algorithm: applying repeatedly
to a form $f$ the two steps $[a,b,c]\mapsto [a,b\%a,c]$ (where $b\%a$
denote the Euclidean remainder of $b$ modulo $a$) and
$[a,b,c]\mapsto[c,b,a]$ yields eventually the reduced form which is
$\GL$-equivalent to $f$.  

For a fast multiplication it is useful to
provide separate implementations according to the types of the values
of the formal Siegel modular forms. Operations on integer valued
Siegel modular forms can clearly be handled faster than on those
taking values, for example, in a polynomial ring over a number field.

We point out that by ``fast multiplication" we do not mean Karatsuba multiplication or the like.  There are Karatsuba-like algorithms for multivariate power series rings, but they do not take advantage of the invariance of the coefficients for the Siegel modular form.  The (potential) gains won by implementing a Siegel modular form as a multivariate power series rings that has a Karatsuba-like multiplication are outweighed by the great number more coefficients such an object would have to store and the number of redundant multiplications that would have to be done.  There is currently no known Karatsuba-type algorithm that respects the invariance of the coefficients of a Siegel modular form, but this will investigated in future work by the first author.

As a final remark we note that the set of all formal Siegel modular
forms for a given coefficient ring $R$, i.e.~the set of formal Siegel
modular forms $C:Q \rightarrow R(\chi)$ ($\chi$ a linear character of
$\GL$), is naturally equipped with a multiplication. Namely, for two
such formal Siegel modular forms $C_1:Q \rightarrow R(\chi_1)$ and
$C_2:Q \rightarrow R(\chi_2)$ the map $C$ defined by
\begin{equation*}
  C(f)
  =
  \sum_{\substack{f_1,f_2\in Q \\ f = f_1+f_2
    }} C_1(f_1)C_2(f_2)
\end{equation*}
defines a formal Siegel modular form $C:Q\rightarrow
R(\chi_1\chi_2)$.  Note that we would like to allow different
characters since we would like to be able to multiply, for example, a
formal Siegel modular form corresponding to even weight scalar-valued
Siegel modular form (and hence with trivial character) with formal
Siegel modular forms corresponding to scalar-valued Siegel modular
forms of odd weight (and hence with character $\sym{det}$). 

However,
if $\chi_1$ is different from $\chi_2$ there is no natural sum of
$C_1$ and $C_2$. On the other hand in our Sage implementation it is
desirable to view formal Siegel modular forms as elements of an
ambient algebra. This allows for an implementation which is consistent
with Sage's internal coercion system, which can take then over the
necessary coercion steps to multiply (and add) formal Siegel modular
forms with different coefficient rings if the coefficients can be coerced to
a common ring. 

To view formal Siegel modular of a given coefficient
ring as elements of an algebra we proceed as follows. Let $\Xi$ be the
group of linear characters of $\GL$. We view a formal Siegel modular
form with values in $R(\chi)$ as a formal Siegel modular form with
values in the group ring $R[\Xi]$ equipped with the $\GL$-action
\begin{equation*}
  (A,\sum_{\chi\in \Xi} c(\chi)e_\chi)
  \mapsto
  \sum_{\chi\in\Xi}\chi(A)\,A.c( \chi)\,e_\chi
\end{equation*}
via the natural embedding $R(\chi)\rightarrow R[\Xi]$ which maps $r$
to $re_\chi$.  Here $e_\chi$ runs through the natural basis of the
group ring $R[\Xi]$. Formal Siegel modular forms with values in
$R[\Xi]$ then naturally form an algebra over the base ring of $R$
(which is either $R$ if the action of $\GL$ on $R$ is trivial or the
base ring $R'$ of $R$ if $R$ is a polynomial ring in two variables
over $R'$ equipped with the natural $\GL$-action) via the Cauchy
product mentioned above and the obvious addition.

\subsection{Data}

At \cite{smf-tables}, one can find data for scalar and vector valued
Siegel modular forms of level 1.  In the future, similar data for
Siegel modular forms of level greater than 1 will be posted.  In
particular, it is our goal to have data for all the rings and spaces
listed in Tables~\ref{tbl:vvsmfs} and \ref{tbl:subgroups}.


\section{Generalizations}\label{sec:generalizations}
A class similar to the {\tt SiegelModularForms\_class} would work also
for other types of automorphic forms like Hilbert modular forms,
Siegel modular forms of higher degree, orthogonal modular forms,
etc. We formulate here in an abstract way the kind of object should
be implemented to treat all kinds of higher rank automorphic forms in a unified way.

Let $G$ be a group, let $R$ be a module commutative ring and $M$ be
a monoid, both equipped with a $G$-action $(g,r)\mapsto g.r$
respectively $(g,m)\mapsto g.m$ (such that, for each $g$ in $G$ the
maps $r\mapsto g.r$ and $m\mapsto g.m$ are homomorphisms of modules or
rings and monoids, respectively).  We set
\begin{equation*}
  R[\![ M]\!]^G
  :=
  \{C:M\rightarrow R\;:\; \forall g\in G, m\in M:C(g.m)=g.C(m)\}
\end{equation*}
Note that $R[\![ M]\!]^G$ , for a ring $R$, is a subring of the ring
of power series $R[\![ M]\!]$.

In fact, the types of automorphic forms mentioned above and many more
possess a Fourier expansion whose Fourier coefficients lie in
$R[\![M]\!]^G$ for suitable choices of $M$, $R$ and $G$.
Table~\ref{tab:equivariant-power-series} summarizes various examples.
An implementation of these objects would again be a class built around
a dictionary whose keys are representatives for the orbits in
$G\backslash M$. The effectiveness of such an implementation depends,
of course, on a good reduction theory for finding distinguished
representatives for the classes in $G\backslash M$. Such a class would
be highly desirable and encourage the implementation of more types of
automorphic forms. An implementation for Sage by the first author will appear soon.

\begin{table}[h]
    \begin{tabular}{|l|l|p{5cm}|}
      \hline\hline      
      
      \multirow{5}{3cm}{Elliptic modular forms} & Group $G$ & 1\\
      & Monoid $M$  & $\Z_{\geq 0}$\\
      & $G$-action on $M$&\\
      & Module (or ring) $R$ &$\F$\\ 
      & $G$-action on $R$ & \\\hline

      \multirow{5}{3cm}{Vector-valued Siegel modular forms of degree $n$ and weight $k,j$} & Group $G$ &  $\sym{GL}(n,\Z)$\\
      & Monoid $M$ & Set of semi-positive definite integral quadratic forms $f$ in $n$ variables\\
      & $G$-action on $M$ & $(g,f)\mapsto f\big((X_1,\dots,X_n)g\big)$\\
      & Module (or ring) $R$ & $\F[X_1,\dots,X_n]_j$\\
      & $G$-action on $R$ &$(g,p)\mapsto\det(g)^k \times p(\big((X_1,\dots,X_n)g\big)$\\\hline

      \multirow{5}{3cm}{Hilbert modular forms of (parallel) weight $k$ over a totally real number field $L$} & Group $G$ & $\Z_L^*$\\
      & Monoid $M$ & Set of totally positive or zero elements in the inverse different of $L$\\
      & $G$-action on $M$ & $(g,a) \mapsto g^2a$\\
      & Module (or ring) $R$ & $\F$\\
      & $G$-action on $R$ &$(g,r)\mapsto N(g)^kr$\\\hline
          
      \multirow{5}{3cm}{Hermitian modular forms over the imaginary quadratic field~$L$} & Group $G$ & $\sym{GL}(n, \Z_{L})$\\
      & Monoid $M$ & Set of semi-positive definite integal hermitian forms ~$f$ over $L$ with $n$ variables\\
      & $G$-action on $M$ & $(g,f)\mapsto f\big((X_1,\dots,X_n)g\big)$\\
      & Module (or ring) $R$ & $\F$\\
      & $G$-action on $R$ &$(g,p) \mapsto \det (g) ^kp$\\\hline
          
      \multirow{5}{3cm}{Jacobi forms of weight $k$ and index $m\gg0$ (in $\vartheta^{-1}$, where $\vartheta$ denotes the different of $L$) over a number field L (cf.~\cite{Boylan})}& Group $G$ & $\Z_L^*\ltimes 2m\Z_L$\\
      & Monoid $M$ & Set of $(D,r)$ in $\vartheta^{-2}\times\vartheta^{-1}$ s.t. $D\equiv r^2\bmod 4m\vartheta^{-1}$, $D=0$ or $-D\gg0$\\
      & $G$-action on $M$ & $\big((\varepsilon,x),(D,r)\big) \mapsto \big((\varepsilon^2D,\varepsilon (r+x))\big)$\\
      & Module (or ring) $R$ & $\F$\\
      & $G$-action on $R$ &$\big((\varepsilon,x),r\big) \mapsto N(\varepsilon)^kr$\\\hline                    
 \end{tabular}
    \caption{Types of Fourier coefficients for various types of
      automorphic forms ($\F$ denotes a commutative ring)}\label{tab:equivariant-power-series}
\end{table}

Most useful for explicit computations are those graded (modules over)
rings of automorphic forms of the types mentioned in the table where
generators are known and can be computed effectively.  There are known
examples, in addition to the one of Siegel modular forms of degree 2 already considered, of spaces of Hilbert modular forms, Jacobi forms, and Siegel modular forms of higher degree whose
generators are known.  More recent examples include Hermitian modular
forms on the full hermitian modular group
\begin{equation*}
  \Gamma_L
  :=
  \{M \in M (4, \Z_L) \;:\; \overline{M}^t J M = J\}
\end{equation*}
with $J := \left(\begin{smallmatrix}0 & I \\ -I & 0
  \end{smallmatrix}\right)$, and where $\Z_L$ is either the maximal
order in $L=\Q (\sqrt{-1})$ \cite{Aoki}, \cite{Krieg} or the maximal
order in $L=\Q(\sqrt{-3})$~\cite{Krieg}.  The generators can, in these
cases, be obtained as Hermitian Maass lifts or Hermitian theta series.


\bibliography{formal_smfs}

\begin{thebibliography}{10}

\bibitem{Aoki}
Hiroki Aoki.
\newblock The graded ring of {H}ermitian modular forms of degree 2.
\newblock {\em Abh. Math. Sem. Univ. Hamburg}, 72:21--34, 2002.

\bibitem{Ibukiyama}
Hiroki Aoki and Tomoyoshi Ibukiyama.
\newblock Simple graded rings of {S}iegel modular forms, differential operators
  and {B}orcherds products.
\newblock {\em Internat. J. Math.}, 16(3):249--279, 2005.

\bibitem{BoechererBasis}
{B\"ocherer}.
\newblock On the basis problem for siegel modular forms of squarefree level,
  2006.
\newblock Preprint.

\bibitem{Bocherer2}
Siegfried B{\"o}cherer.
\newblock On the {H}ecke operator {$U(p)$}.
\newblock {\em J. Math. Kyoto Univ.}, 45(4):807--829, 2005.
\newblock With an appendix by Ralf Schmidt.

\bibitem{BorcherdsGrassmannians}
Richard~E. Borcherds.
\newblock Automorphic forms with singularities on {G}rassmannians.
\newblock {\em Invent. Math.}, 132(3):491--562, 1998.

\bibitem{MAGMA}
Wieb Bosma, John~J. Cannon, and Catherine Playoust.
\newblock The {Magma} algebra system. {I}. {The} user language.
\newblock {\em j-J-SYMBOLIC-COMP}, 24(3--4):235--266, September--October 1997.
\newblock Computational algebra and number theory (London, 1993).

\bibitem{Boylan}
Hatice Boylan, Shuichi Hayashida, and Nils-Peter Skoruppa.
\newblock {J}acobi forms over $\mathbb{Q}(\sqrt{5})$.
\newblock preprint, 2010.

\bibitem{ChoieEholzer}
YoungJu Choie and Wolfgang Eholzer.
\newblock Rankin-{C}ohen operators for {J}acobi and {S}iegel forms.
\newblock {\em J. Number Theory}, 68(2):160--177, 1998.

\bibitem{Christian}
Ulrich Christian.
\newblock Berechnung des {R}anges der {S}char der {S}pitzenformen zur
  {M}odulgruppe zweiten {G}rades und {S}tufe {$q>2$}.
\newblock {\em J. Reine Angew. Math.}, 277:130--154, 1975.

\bibitem{Krieg}
Tobias Dern and Aloys Krieg.
\newblock Graded rings of {H}ermitian modular forms of degree 2.
\newblock {\em Manuscripta Math.}, 110(2):251--272, 2003.

\bibitem{EholzerIbukiyama}
Wolfgang Eholzer and Tomoyoshi Ibukiyama.
\newblock Rankin-{C}ohen type differential operators for {S}iegel modular
  forms.
\newblock {\em Internat. J. Math.}, 9(4):443--463, 1998.

\bibitem{Eichler}
M.~Eichler.
\newblock The basis problem for modular forms and the traces of the {H}ecke
  operators.
\newblock In {\em Modular functions of one variable, {I} ({P}roc. {I}nternat.
  {S}ummer {S}chool, {U}niv. {A}ntwerp, {A}ntwerp, 1972)}, pages 75--151.
  Lecture Notes in Math., Vol. 320. Springer, Berlin, 1973.

\bibitem{EichlerCorrection}
M.~Eichler.
\newblock Correction to: ``{T}he basis problem for modular forms and the traces
  of the {H}ecke operators'' ({M}odular functions of one variable, {I} ({P}roc.
  {I}nternat. {S}ummer {S}chool, {U}niv. {A}ntwerp, 1972), pp. 75--151,
  {L}ecture {N}otes in {M}ath., {V}ol. 320, {S}pringer, {B}erlin, 1973).
\newblock In {\em Modular functions of one variable, {IV} ({P}roc. {I}nternat.
  {S}ummer {S}chool, {U}niv. {A}ntwerp, {A}ntwerp, 1972)}, pages 145--147.
  Lecture Notes in Math., Vol. 476. Springer, Berlin, 1975.

\bibitem{EichlerZagier}
Martin Eichler and Don Zagier.
\newblock {\em The theory of {J}acobi forms}, volume~55 of {\em Progress in
  Mathematics}.
\newblock Birkh\"auser Boston Inc., Boston, MA, 1985.

\bibitem{BergstroemFaberVanderGeer1}
Carel Faber and Gerard van~der Geer.
\newblock Sur la cohomologie des syst\`emes locaux sur les espaces de modules
  des courbes de genre 2 et des surfaces ab\'eliennes. {I}.
\newblock {\em C. R. Math. Acad. Sci. Paris}, 338(5):381--384, 2004.

\bibitem{BergstroemFaberVanderGeer2}
Carel Faber and Gerard van~der Geer.
\newblock Sur la cohomologie des syst\`emes locaux sur les espaces de modules
  des courbes de genre 2 et des surfaces ab\'eliennes. {II}.
\newblock {\em C. R. Math. Acad. Sci. Paris}, 338(6):467--470, 2004.

\bibitem{Freitag}
Eberhard Freitag and Riccardo~Salvati Manni.
\newblock Hermitian modular forms and the {B}urkhardt quartic.
\newblock {\em Manuscripta Math.}, 119(1):57--59, 2006.

\bibitem{GKR11}
D.~{Gehre}, J.~{Kreuzer}, and M.~{Raum}.
\newblock {Computing Borcherds Products}.
\newblock {\em ArXiv e-prints}, November 2011.

\bibitem{Gritsenko}
Valeri~A. Gritsenko and Viacheslav~V. Nikulin.
\newblock Automorphic forms and {L}orentzian {K}ac-{M}oody algebras. {II}.
\newblock {\em Internat. J. Math.}, 9(2):201--275, 1998.

\bibitem{Hashimoto}
Ki-ichiro Hashimoto.
\newblock The dimension of the spaces of cusp forms on {S}iegel upper
  half-plane of degree two. {I}.
\newblock {\em J. Fac. Sci. Univ. Tokyo Sect. IA Math.}, 30(2):403--488, 1983.

\bibitem{Ibukiyama3}
Shuichi Hayashida and Tomoyoshi Ibukiyama.
\newblock Siegel modular forms of half integral weight and a lifting
  conjecture.
\newblock {\em J. Math. Kyoto Univ.}, 45(3):489--530, 2005.

\bibitem{IbukiyamaSkoruppa}
T.~Ibukiyama and N.-P. Skoruppa.
\newblock A vanishing theorem for {S}iegel modular forms of weight one.
\newblock {\em Abh. Math. Sem. Univ. Hamburg}, 77:229--235, 2007.

\bibitem{Ibukiyama2}
Tomoyoshi Ibukiyama.
\newblock On {S}iegel modular varieties of level {$3$}.
\newblock {\em Internat. J. Math.}, 2(1):17--35, 1991.

\bibitem{IbukiyamaDifferential}
Tomoyoshi Ibukiyama.
\newblock On differential operators on automorphic forms and invariant
  pluri-harmonic polynomials.
\newblock {\em Comment. Math. Univ. St. Paul.}, 48(1):103--118, 1999.

\bibitem{IbukiyamaSym46}
Tomoyoshi Ibukiyama.
\newblock Vector-valued {S}iegel modular forms of {${\rm Sym}(4)$} and {${\rm
  Sym}(6)$}.
\newblock {\em S\=urikaisekikenky\=usho K\=oky\=uroku}, (1281):126--140, 2002.
\newblock Automorphic forms and their Dirichlet series (Japanese) (Kyoto,
  2002).

\bibitem{IbukiyamaHecke}
Tomoyoshi Ibukiyama.
\newblock A conjecture on a {S}himura type correspondence for {S}iegel modular
  forms, and {H}arder's conjecture on congruences.
\newblock In {\em Modular forms on {S}chiermonnikoog}, pages 107--144.
  Cambridge Univ. Press, Cambridge, 2008.

\bibitem{Igusa2}
Jun-ichi Igusa.
\newblock On {S}iegel modular forms of genus two.
\newblock {\em Amer. J. Math.}, 84:175--200, 1962.

\bibitem{Igusa3}
Jun-ichi Igusa.
\newblock On {S}iegel modular forms genus two. {II}.
\newblock {\em Amer. J. Math.}, 86:392--412, 1964.

\bibitem{Igusa4}
Jun-ichi Igusa.
\newblock On the graded ring of theta-constants.
\newblock {\em Amer. J. Math.}, 86:219--246, 1964.

\bibitem{Igusa5}
Jun-ichi Igusa.
\newblock On the graded ring of theta-constants. {II}.
\newblock {\em Amer. J. Math.}, 88:221--236, 1966.

\bibitem{kohnenzagierperiods}
W.~Kohnen and D.~Zagier.
\newblock Modular forms with rational periods.
\newblock In {\em Modular forms ({D}urham, 1983)}, Ellis Horwood Ser. Math.
  Appl.: Statist. Oper. Res., pages 197--249. Horwood, Chichester, 1984.

\bibitem{Maass64}
Hans Maass.
\newblock Die {F}ourierkoeffizienten der {E}isensteinreihen zweiten {G}rades.
\newblock {\em Mat.-Fys. Medd. Danske Vid. Selsk.}, 34(7):25 pp. (1964), 1964.

\bibitem{Maass}
Hans Maass.
\newblock \"{U}ber eine {S}pezialschar von {M}odulformen zweiten {G}rades.
\newblock {\em Invent. Math.}, 52(1):95--104, 1979.

\bibitem{Miyawaki}
Makoto Miyawaki.
\newblock Explicit construction of {R}ankin-{C}ohen-type differential operators
  for vector-valued {S}iegel modular forms.
\newblock {\em Kyushu J. Math.}, 55(2):369--385, 2001.

\bibitem{Mizumoto81}
Shin-ichiro Mizumoto.
\newblock Fourier coefficients of generalized {E}isenstein series of degree
  two. {I}.
\newblock {\em Invent. Math.}, 65(1):115--135, 1981/82.

\bibitem{Mizumoto84}
Shin-ichiro Mizumoto.
\newblock Fourier coefficients of generalized {E}isenstein series of degree
  two. {II}.
\newblock {\em Kodai Math. J.}, 7(1):86--110, 1984.

\bibitem{Mizuno}
Yoshinori Mizuno.
\newblock An explicit arithmetic formula for the {F}ourier coefficients of
  {S}iegel-{E}isenstein series of degree two and square-free odd levels.
\newblock {\em Math. Z.}, 263(4):837--860, 2009.

\bibitem{Morita}
Yasuo Morita.
\newblock An explicit formula for the dimension of spaces of {S}iegel modular
  forms of degree two.
\newblock {\em J. Fac. Sci. Univ. Tokyo Sect. IA Math.}, 21:167--248, 1974.

\bibitem{Ozeki}
Michio Ozeki.
\newblock On basis problem for {S}iegel modular forms of degree 2.
\newblock {\em Acta Arith.}, 31(1):17--30, 1976.

\bibitem{PoorYuenExtremeCore}
C.~Poor and D.~S. Yuen.
\newblock The extreme core.
\newblock {\em Abh. Math. Sem. Univ. Hamburg}, 75:51--75, 2005.

\bibitem{PoorYuenDimensions}
C.~Poor and D.~S. Yuen.
\newblock Dimensions of cusp forms for {$\Gamma_0(p)$} in degree two and small
  weights.
\newblock {\em Abh. Math. Sem. Univ. Hamburg}, 77:59--80, 2007.

\bibitem{PoorYuen2}
Cris Poor and David~S. Yuen.
\newblock Linear dependence among {S}iegel modular forms.
\newblock {\em Math. Ann.}, 318(2):205--234, 2000.

\bibitem{PoorYuen}
Cris Poor and David~S. Yuen.
\newblock Computations of spaces of {S}iegel modular cusp forms.
\newblock {\em J. Math. Soc. Japan}, 59(1):185--222, 2007.

\bibitem{PoorYuen3}
Cris Poor and David~S. Yuen.
\newblock Paramodular cusp forms.
\newblock {\tt http://arxiv.org/abs/0912.0049}, 2009.

\bibitem{Raum}
Martin Raum.
\newblock Efficiently generated spaces of classical {S}iegel modular forms and
  the {B}\"ocherer conjecture.
\newblock {\em J. Aust. Math. Soc.}, 89(3):393--405, 2010.

\bibitem{smf-package}
Martin Raum, Nathan~C. Ryan, Nils-Peter Skoruppa, and Gonzalo Tornar\'ia.
\newblock Siegel modular forms package.
\newblock {\tt http://hg.countnumber.de}, 2009.

\bibitem{Runge}
Bernhard Runge.
\newblock Codes and {S}iegel modular forms.
\newblock {\em Discrete Math.}, 148(1-3):175--204, 1996.

\bibitem{Satoh}
Takakazu Satoh.
\newblock Construction of certain vector valued {S}iegel modular forms of
  degree two.
\newblock {\em Proc. Japan Acad. Ser. A Math. Sci.}, 61(7):225--227, 1985.

\bibitem{SerreStark}
J.-P. Serre and H.~M. Stark.
\newblock Modular forms of weight {$1/2$}.
\newblock In {\em Modular functions of one variable, {VI} ({P}roc. {S}econd
  {I}nternat. {C}onf., {U}niv. {B}onn, {B}onn, 1976)}, pages 27--67. Lecture
  Notes in Math., Vol. 627. Springer, Berlin, 1977.

\bibitem{Skoruppa}
Nils-Peter Skoruppa.
\newblock Computations of {S}iegel modular forms of genus two.
\newblock {\em Math. Comp.}, 58(197):381--398, 1992.

\bibitem{smf-tables}
Nils-Peter Skoruppa.
\newblock {T}ables of {S}iegel modular forms, 2009.
\newblock http://data.countnumber.de/Siegel-Modular-Forms.

\bibitem{Sage}
W.\thinspace{}A. Stein et~al.
\newblock {\em {S}age {M}athematics {S}oftware ({V}ersion 4.7)}.
\newblock The Sage~Development Team, 2011.
\newblock {\tt http://www.sagemath.org}.

\bibitem{Tsushima}
R.~Tsushima.
\newblock Dimension formula for the spaces of {S}iegel cusp forms and a certain
  exponential sum.
\newblock {\em Mem. Inst. Sci. Tech. Meiji Univ.}, 36:1--56, 1997.

\bibitem{Weissauer}
Rainer Weissauer.
\newblock Modular forms of genus {$2$} and weight {$1$}.
\newblock {\em Math. Z.}, 210(1):91--96, 1992.

\bibitem{Yamazaki}
Tadashi Yamazaki.
\newblock On {S}iegel modular forms of degree two.
\newblock {\em Amer. J. Math.}, 98(1):39--53, 1976.

\end{thebibliography}
\bibliographystyle{plain}

\affiliationone{        
Martin Raum\\
Max-Planck-Institut f\"ur Mathematik\\
Bonn\\
Germany\\
\email{mraum@mpim-bonn.mpg.de}
}
\affiliationtwo{
Nathan C. Ryan\\
Department of Mathematics\\
Bucknell University\\
USA\\
\email{nathan.ryan@bucknell.edu}\\
}
\affiliationthree{
Nils-Peter Skoruppa\\
Fachbereich Mathematik\\
Universit\"at Siegen\\
Germany\\
\email{nils.skoruppa@gmail.de}\\
}
\affiliationthree{
Gonzalo Tornar\'ia\\
Centro de Matem\'atica\\
Universidad de la Rep\'ublica\\
Uruguay\\
\email{tornaria@cmat.edu.uy}
}

\end{document}